
\listfiles

\begin{filecontents}{fichier.bbl}

\end{filecontents}
\documentclass[a4paper,12pt]{article}

\sloppy

\textwidth15cm



\usepackage{amsmath}
\usepackage{amssymb}
\usepackage{theorem} 
\usepackage{rotating}




\newcommand{\qed}{\relax\ifmmode\hskip2em\Box\else\unskip\nobreak\hfill$\Box$\fi}






\newcommand{\N}{\ensuremath{\mathbb{N}}}



\newcommand{\comb}[2]{\ensuremath{{#1 \choose #2}}}


\newtheorem{defeng}{Definition}

\newtheorem{theorem}{Theorem}

\newtheorem{lemma}{Lemma}
\newtheorem{proposition}{Proposition}

{\theorembodyfont{\rmfamily} }
{\theorembodyfont{\rmfamily} }
{\theorembodyfont{\rmfamily} }
{\theorembodyfont{\rmfamily} }
{\theorembodyfont{\rmfamily} }
	{\noindent {\bf Preuve}{#1}{\bf\ : }}{\qed\vspace{2ex}}
\newenvironment{proof}[1][]%
	{\noindent {\bf Proof}{#1}{\bf: }}{\qed\vspace{2ex}}



\title{Ramsey-type results  on singletons, co-singletons  and monotone
sequences in large collections of sets}

\author{Sylvain Gravier\thanks{C.N.R.S., Laboratoire Leibniz --- IMAG,
     46    Avenue    F\'elix    Viallet,   38031    Grenoble    Cedex.
     Sylvain.Gravier@imag.fr;      Frederic.Maffray@imag.fr}      \and
     \addtocounter{footnote}{-1}   Fr\'ed\'eric   Maffray\footnotemark
     \and   J\'er\^ome  Renault\thanks{CEREMADE,   Universit\'e  Paris
     Dauphine.      renault@ceremade.dauphine.fr}     \and     Nicolas
     Trotignon\thanks{Laboratoire Leibniz ---  IMAG, 46 Avenue F\'elix
     Viallet,        38031       Grenoble        Cedex.       \newline
     Nicolas.Trotignon@imag.fr}}

\date{September 9, 2003}

\begin{document}

\bibliographystyle{plain}

\maketitle

\begin{abstract}

We say  that a 0-1 matrix  $N$ of size $a\times  b$ can be  found in a
  collection of sets $\mathcal{H}$ if  we can find sets $H_{1}, H_{2},
  \dots, H_{a}$  in $\mathcal{H}$ and elements $e_1,  e_2, \dots, e_b$
  in  $\cup_{H \in  \mathcal{H}} H$  such  that $N$  is the  incidence
  matrix of  the sets $H_{1},  H_{2}, \dots, H_{a}$ over  the elements
  $e_1, e_2, \dots, e_b$.   We prove the following Ramsey-type result:
  for every $n\in  \N$, there exists a number $S(n)$  such that in any
  collection  of  at  least  $S(n)$  sets, one  can  find  either  the
  incidence  matrix  of  a   collection  of  $n$  singletons,  or  its
  complementary matrix, or the incidence matrix of a collection of $n$
  sets completely ordered by inclusion. We give several results of the
  same extremal set  theoretical flavour.  For some of  these, we give
  the exact value of the number of sets required.

\vspace{2ex}

\noindent {\bf R\'esum\'e\ (in french):}

On dit qu'une matrice 0-1 $N$ de taille $a\times b$ se trouve dans une
  collection  d'ensembles  $\mathcal{H}$  si  l'on  peut  trouver  des
  ensembles  $H_{1}, H_{2},  \dots, H_{a}$  dans $\mathcal{H}$  et des
  \'el\'ements $e_1,  e_2, \dots, e_b$ dans  $\cup_{H \in \mathcal{H}}
  H$ tels que $N$ soit la  matrice d'incidence de la trace des $H_{1},
  H_{2}, \dots,  H_{a}$ sur les  \'el\'ements $e_1, e_2,  \dots, e_b$.
  Nous d\'emontrons  le r\'esultat suivant de type  Ramsey~: Pour tout
  $n\in \N$, il existe un  nombre $S(n)$ tel que dans toute collection
  d'au  moins $S(n)$ ensembles  distincts, on  trouve soit  la matrice
  d'incidence   d'une   collection   de   $n$  singletons,   soit   le
  compl\'ementaire de cette matrice, soit la matrice d'incidence d'une
  collection de  $n$ ensembles totalement  ordonn\'es par l'inclusion.
  Nous   donnons   quelques   r\'esultats  similaires   de   th\'eorie
  extr\`emale des ensembles.  Pour  certains d'entre eux, nous donnons
  le nombre exact d'ensembles requis.

\end{abstract}

\section*{Introduction}
We give  here results  about the local  structure of any  large enough
  family of distinct sets.  By  local structure, we mean a description
  of some  of the sets for some  of the elements of  their union.  Our
  Lemma~\ref{lemme1}  states that  in any  large enough  collection of
  distinct sets,  one can find  an ``increasing'' or  a ``decreasing''
  sequence, in a weak sense described below.

Bauslaugh~\cite{bauslaugh:list} originally gave an infinite version of
   that lemma and used it to find in any infinite twinless digraph some
   special  induced  subdigraph, thus  giving  a  counter-example to  a
   property   of  compactness  for   list-colouring.   But   the  proof
   in~\cite{bauslaugh:list} has an error%
\footnote{In~\cite{bauslaugh:list},  in the  proof of  Lemma 7,  it is
claimed (page 21,  line 18) that ``$|S \backslash  (S \cap A_i)|$ must
take on arbitrary large  finite values \dots''.  However, the instance
$A_i=\N \backslash \{i\}$ with $A=S=\N$ satisfies all the requirements
while $|S \backslash (S \cap A_i)|$ takes only value 1.},
and our proof (section~\ref{sectionL}) may be considered as an erratum
   to~\cite{bauslaugh:list}.

Using Lemma~\ref{lemme1} and Ramsey theory, we prove that in any large
     enough  family  of  distinct  sets,  we  can  find  a  very  precise
     substructure    (Theorem~\ref{lemmealaR},   section~\ref{sectionR}).
     While we cannot give an exact bound in general, we provide lower and
     upper bounds and we give an exact bound for special cases.

F\"{u}redi and  Tuza \cite{furedi:hypergraphs} gave a  theorem that is
    more  precise than Lemma~\ref{lemme1}  in the  case where  the sets
    under  consideration   are  ``small''.   We  then   use  the  ideas
    introduced  in section~\ref{sectionR} and  the ideas  of F\"{u}redi
    and Tuza to get a new result (section \ref{sectionF}).\\

For any non-negative integer $n$,  we denote by $[n]$ the set $\{1, 2,
    \dots,  n\}$ ($[0]=\emptyset$).   If  $E$ is  a  finite set,  $|E|$
    denotes its cardinality and for each $k$, $\comb{E}{k}$ denotes the
    collection of all subsets of $E$ of size $k$.

\section{Increasing and decreasing sequence in large enough collections
     of sets}
\label{sectionL}

We first define increasing and  decreasing sequences of sets.

\begin{defeng}
Let $(H_1, H_2, \dots, H_k)$ be a  finite sequence of sets.

\begin{itemize}

\item
     The sequence is \emph{$(k-1)$-increasing} if $H_{1} \subsetneq
H_{1}  \cup  H_{2}
     \subsetneq  \dots   \subsetneq  H_{1}  \cup   H_{2}\cup  \dots  \cup
     H_{{k}}$.

Similarly, a  countably infinite sequence of sets  $(H_1, H_2, \dots)$
     is  \emph{increasing} if  and only  if $H_{1}  \subsetneq H_{1}  \cup H_{2}
     \subsetneq \dots $

\item
The sequence is \emph{$(k-1)$-decreasing} if $H_{1} \supsetneq  H_{1}
\cap  H_{2}
     \supsetneq  \dots   \supsetneq  H_{1}  \cap   H_{2}\cap  \dots  \cap
     H_{{k}}$.

Similarly, a countably infinite  sequence of sets $(H_1, H_2, \dots)$
     is \emph{decreasing} if and only if $H_{1} \supsetneq H_{1} \cap
     H_{2} \supsetneq \dots $
\end{itemize}
\end{defeng}

How large should be a collection of sets to contain  a $(k-1)$
increasing or decreasing sequence? This is answered by our
first lemma.

\begin{lemma} \label{lemme1}
Let  $k$, $l$  and $m$  be  in $\N$, and $\{H_1,H_2,  \dots,
     H_m\}$ be a collection of $m$ distinct sets.  If $m > \comb{k+l}{l}$
     then at least one of the following two statements holds:

\begin{itemize}

\item Among the $H_i$'s, one can find a $(k+1)$-increasing sequence of
sets $(H_{i_1}, H_{i_2}, \dots, H_{i_{k+2}})$.

\item  Among  the  $H_i$'s,  one  can  find  an $(l+1)$-decreasing
      sequence of sets $(H_{i_1},  H_{i_2},  \dots,
H_{i_{l+2}})$.

\end{itemize}

\end{lemma}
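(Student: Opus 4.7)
The plan is to prove the lemma by induction on $k+l$. For the base case, when $k=0$ (or, symmetrically, $l=0$), the threshold $\binom{k+l}{l}=1$ forces $m\geq 2$, and any two distinct sets $H_1,H_2\in\mathcal{H}$ furnish a $1$-increasing sequence: since $H_1\neq H_2$, at least one of $H_1\not\subseteq H_2$ or $H_2\not\subseteq H_1$ holds, and correspondingly $(H_2,H_1)$ or $(H_1,H_2)$ gives the required strict inclusion of unions.

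For the inductive step, assume the result for all smaller values of $k+l$ and suppose $|\mathcal{H}|>\binom{k+l}{l}$. I would exploit Pascal's identity $\binom{k+l}{l}=\binom{k+l-1}{l-1}+\binom{k+l-1}{l}$ to split $\mathcal{H}$ via pigeonhole. Choose a pivot $H^*\in\mathcal{H}$ that is inclusion-minimal; then every other $H\in\mathcal{H}$ is either a strict superset of $H^*$ (collected in $\mathcal{A}$) or incomparable with $H^*$ (collected in $\mathcal{B}$), because no element of $\mathcal{H}$ is a strict subset of $H^*$ by minimality. Pigeonhole then guarantees that at least one of $|\mathcal{A}|>\binom{k+l-1}{l}$ or $|\mathcal{B}|>\binom{k+l-1}{l-1}$ holds, with a small bookkeeping adjustment that allocates $H^*$ into the appropriate side.

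In the first regime, I would pass to the residuals $R_H=H\setminus H^*$ for $H\in\mathcal{A}$, which are distinct non-empty subsets of $\bigcup\mathcal{H}\setminus H^*$. A sequence of sets all containing $H^*$ is increasing (resp.\ decreasing) if and only if its residuals are, and prepending $H^*$ to an increasing sequence in $\mathcal{A}$ extends it by one step. Applying the inductive hypothesis with parameters $(k-1,l)$ to the residuals therefore yields either a $k$-increasing sequence lifting to a $(k+1)$-increasing sequence in $\mathcal{H}$, or an $(l+1)$-decreasing sequence directly in $\mathcal{H}$. In the second regime, I would apply the inductive hypothesis to $\mathcal{B}$ with parameters $(k,l-1)$, getting either a $(k+1)$-increasing sequence (direct contradiction) or an $l$-decreasing sequence $(H_1,\ldots,H_{l+1})$ that I would try to extend to an $(l+1)$-decreasing sequence in $\mathcal{H}$ by prepending $H^*$.

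The main obstacle is this last extension. Prepending $H^*$ gives the first strict decrease $H^*\supsetneq H^*\cap H_1$ for free (because $H_1$ is incomparable with $H^*$), but each subsequent step demands $H^*\cap H_1\cap\cdots\cap H_{i-1}\not\subseteq H_i$, which is strictly stronger than the original decreasing condition in $\mathcal{B}$ and is not automatically guaranteed. I expect this hurdle to require refining the choice of pivot---perhaps selecting $H^*$ subject to an additional extremal property ensuring that the intersections along short decreasing sequences in $\mathcal{B}$ remain ``inside'' $H^*$---or combining the argument with its symmetric dual using an inclusion-maximal pivot, so that the two partial outcomes fit together cleanly to yield the desired contradiction.
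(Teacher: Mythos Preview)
Your proposal has a genuine gap, and to your credit you identify it yourself: in the second regime, prepending the inclusion-minimal pivot $H^*$ to an $l$-decreasing sequence $(H_1,\dots,H_{l+1})$ from $\mathcal{B}$ need not give an $(l+1)$-decreasing sequence. The witnesses for the strict intersections $H_1\cap\cdots\cap H_{i-1}\supsetneq H_1\cap\cdots\cap H_i$ may all lie outside $H^*$, so after intersecting with $H^*$ the chain can collapse. Appending $H^*$ instead does not help either (take $H^*=\{1\}$, $H_1=\{2\}$, $H_2=\{3\}$), and the ``dual'' argument with an inclusion-maximal pivot simply transfers the same obstruction to the increasing side. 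Your suggested remedies---refining the pivot or combining the two dual arguments---are not worked out, and it is not clear they can be made to work without essentially changing the decomposition.

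The paper avoids this difficulty by pivoting on a single \emph{element} rather than on a whole set. One picks any $x$ that lies in some $H_i$ but not in all of them, and splits $\mathcal{H}$ into the $m_1$ sets containing $x$ and the $m_2$ sets not containing $x$. Pascal's identity then forces $m_1>\binom{k+l-1}{l-1}$ or $m_2>\binom{k+l-1}{l}$. The point is that now the extension step is automatic: an $l$-decreasing sequence among the sets containing $x$ has $x$ in every partial intersection, so appending any set without $x$ strictly shrinks the intersection; symmetrically, a $k$-increasing sequence among the sets not containing $x$ has $x$ absent from every partial union, so appending any set with $x$ strictly enlarges the union. A single element serves as a uniform witness for the final strict step, which is exactly what your set-valued pivot fails to provide.
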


\begin{proof}

We proceed  by induction on $k, l$.   If $k=0$ or $l=0$,  the lemma is
     clear.  Assume now  $k>0$ and $l>0$, and let $\{H_1,...,H_m\}$ be a
collection of $m$ distinct sets with  $m >
\comb{k+l}{l}$.  So   $m\geq  2$, and there exists
$x$ in  $(H_1\cup  H_2\cup \dots \cup  H_m )\backslash (H_1\cap
H_2\cap \dots \cap  H_m )$.
Let  $m_1$ (resp. $m_2$) be the
number of
     sets among  $H_1, H_2,  \dots , H_m$ that  contain $x$  (resp. that do
     not contain $x$).   So $m_1$ and $m_2$ are positive and  $m=m_1+m_2$.
Since ${k+l  \choose  l} =
     {k+l-1 \choose l-1}+{k-1+l \choose l}$, at least one of the two
following cases holds:

\begin{itemize}

\item $m_1 > {k+l-1 \choose l-1}$. By the induction hypothesis we find
among the  sets that contain  $x$ a $(k+1)$-increasing sequence  or an
$l$-decreasing sequence. In the first  case we are done. In the second
one it  suffices to append any  set without $x$  to the $l$-decreasing
sequence to get an $(l+1)$-decreasing sequence.

\item   $m_2  >   {k-1+l   \choose  l}$.    Similarly,   we  find   an
$(l+1)$-decreasing  sequence,  or  a  $(k+1)$-increasing  sequence  by
appending  any set  with  $x$  to a  $k$-increasing  sequence of  sets
without $x$.

\end{itemize}
\vbox{}
\end{proof}

The tightness  of the bound  $\comb{k+l}{l}$ in Lemma~\ref{lemme1} is
      established    by    considering    the    collection    of    sets
      $\comb{[k+l]}{l}$.

In  any  infinite  collection  of   distinct  sets,  we  can  find  by
     Lemma~\ref{lemme1}  an arbitrarily  long increasing  or decreasing
     sequence.  But  this does not  immediately imply that there  is an
     infinite increasing or decreasing sequence.  This is why we recall
     and   prove  here   the  infinite   lemma  originally   stated  by
     Bauslaugh~\cite{bauslaugh:list}.    One  could   try  to   find  a
     compactness argument (see~\cite{nicolas:dea})  to establish a link
     between  the finite  lemma (Lemma~\ref{lemme1})  and  the infinite
     lemma below:

\begin{lemma}  \label{lemmeinf}
Let  $\mathcal{H}=\{H_1,H_2, \dots  \}$ be  an infinite  collection of
    distinct sets.  One of the two following propositions holds:

\begin{itemize}
\item Among the $H_i$'s, one  can find an infinite increasing sequence
$(H_{i_1}, H_{i_2}, \dots)$.

\item Among the $H_i$'s, one  can find an infinite decreasing sequence
$(H_{i_1}, H_{i_2}, \dots)$.
\end{itemize}

\end{lemma}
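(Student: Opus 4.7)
The plan is to run a double dichotomy on partial unions and partial intersections, and to handle the residual case via a compactness argument.

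List $\mathcal{H} = \{H_1, H_2, \ldots\}$ and form the partial unions $U_n = H_1 \cup \cdots \cup H_n$. If $U_n \supsetneq U_{n-1}$ holds at infinitely many indices $n_1 < n_2 < \cdots$, then $(H_{n_1}, H_{n_2}, \ldots)$ is an infinite increasing sequence: for each $k \geq 2$, $H_{n_k}$ contains an element absent from $U_{n_k - 1}$, hence absent from $H_{n_1} \cup \cdots \cup H_{n_{k-1}} \subseteq U_{n_k-1}$, so the cumulative unions of the selected subsequence strictly grow at every step.

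Otherwise the $U_n$ stabilise at some $U_N$, all but finitely many $H_n$ lie in $U_N$, and we pass to this infinite tail and apply the dual argument to partial intersections. This either yields an infinite decreasing sequence by the symmetric construction, or stabilises at some $I_M$, in which case almost all sets in the tail satisfy $I_M \subseteq H_n \subseteq U_N$. The plan is then to iterate the argument within this smaller family, shifting by $I_M$ so that its members become distinct subsets of $U_N \setminus I_M$; this produces a nested sequence of intervals in successive levels.

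The main obstacle is forcing this iteration to terminate rather than loop indefinitely with a fixed interval. I would address this by appealing to Lemma~\ref{lemme1} at every level: since each subcollection is infinite, it contains finite increasing or decreasing sequences of every length. Converting these arbitrarily long finite witnesses into a single infinite monotone sequence requires a compactness argument; the naive tree of candidate finite monotone sequences is not finitely branching, so K\"{o}nig's lemma does not apply directly. A cleaner route is a diagonal extraction in the Cantor space $\{0,1\}^{\bigcup \mathcal{H}}$: passing to a convergent subsequence $H_{n_k} \to L$ and using that each element outside $L$ lies in only finitely many terms, one can then case-analyse the symmetric differences $H_{n_k} \triangle L$ and build the required infinite monotone sequence directly. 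This last step is the most delicate part.
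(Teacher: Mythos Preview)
Your opening moves are correct: if the partial unions $U_n$ strictly increase at infinitely many indices you do get an infinite increasing sequence, and the dual statement for partial intersections is equally valid. The trouble is entirely in the residual case, and you have not closed it.

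Once both chains stabilise you are left with infinitely many distinct sets trapped between $I_M$ and $U_N$; shifting by $I_M$ and re-running the argument need not shrink the interval at all (the new partial unions can stabilise at $U_N\setminus I_M$ again and the new partial intersections at $\emptyset$), so the iteration you propose can loop forever. You recognise this, but neither of your exits actually lands. Arbitrarily long finite increasing or decreasing sequences from Lemma~\ref{lemme1} do not assemble into an infinite one without a finitely-branching tree, and you concede K\"onig's lemma does not apply. The Cantor-space route is also left as a sketch: sequential compactness of $\{0,1\}^{X}$ needs $X$ countable (a reduction is possible by picking one element of $H_i\triangle H_j$ for each pair, but you do not say this), and even after obtaining $H_{n_k}\to L$ you still have to extract an infinite monotone sequence from the symmetric differences, which you call ``the most delicate part'' and do not carry out. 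That extraction is not routine: knowing that each fixed element eventually agrees with $L$ does not by itself give a single sequence whose cumulative unions (or intersections) move strictly at every step.

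The paper avoids the residual case entirely with a one-pass greedy construction. At stage $i$ it picks any element $x_i$ lying in some but not all of the current (infinite) family, records one set $H_{k_i}$ on the minority side, and passes to the infinite subfamily on the majority side. This yields pairs $(x_i,H_{k_i})$ with the property that either $x_i\notin H_{k_i}$ and $x_i\in H_{k_j}$ for all $j>i$, or the reverse. One of these two types occurs infinitely often, and the corresponding subsequence of $H_{k_i}$'s is directly increasing (first type) or decreasing (second type). No compactness, no stabilisation analysis, and no restriction on the ground set are needed.
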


\begin{proof}
We  claim  that  there  exists an  infinite  sequence  $(x_1,H_{k_1}),
    (x_2,H_{k_2}), \dots$,  such that  for every $i  \geq 1$ one  of the
    following two properties holds:

\begin{enumerate}

\item $x_i\notin H_{k_i}$ and for every $j>i$, $x_i\in H_{k_j}$.

\item $x_i\in H_{k_i}$ and for every $j>i$, $x_i\notin H_{k_j}$.

\end{enumerate}

We establish the claim by induction on $i$.  For $i=1$, pick any $x_1$
    which lies in at least one $H_k$ but not in all of them.

If $x_1$ lies  in infinitely many $H_k$'s, then, let $H_{k_1}$ be one
     $H_k$  that does not  contain $x_1$.   Continue with  the (infinite)
     collection of all $H_k$'s that contain $x_1$.  If $x_1$ lies in only
     finitely many $H_k$'s, then, let $H_{k_1}$ be one of them.  Continue
     with the  (infinite) collection of  all $H_k$'s that do  not contain
     $x_1$.  The  proof is entirely similar  for each $i\geq  1$.  So the
     claim is proved.

Now,  one the two  properties 1,  2, holds  for infinitely  many pairs
     $(x_i,  H_{k_i})$.  If  it is  property  1, we  find an  increasing
     sequence, and if it is property 2, we find a decreasing sequence.
\end{proof}

Note that in  Lemma~\ref{lemme1} very little is required  of the sets:
    they do not have  to be subsets of a given set, or  to be of a given
    size, or even to be finite.   But the lemma does not tell much about
    the structure one may hope to find in a sufficiently large family of
    distinct sets,  and one may suspect  that a better  result is hidden
    behind  our   lemma.   Before  going  further,   we  introduce  some
    definitions.

It  will  be convenient  to  work  with  incidence matrices.  For  any
    collection of  sets $\mathcal{H}$, and  any 0-1 matrix $N$  with $a$
    rows and $b$ columns, we say  that $N$ can be found in $\mathcal{H}$
    if  we  can find  distinct  sets  $H_{1},  H_{2}, \dots,  H_{a}$  in
    $\mathcal{H}$  and  distinct  elements  $e_1, e_2,  \dots,  e_b$  in
    $\cup_{H \in \mathcal{H}}  H$ such that $N$ is  the incidence matrix
    of the  sets $H_{1},  H_{2}, \dots, H_{a}$  over the  elements $e_1,
    e_2, \dots, e_b$ (ie $N_{\alpha,\beta}=1$ if and only if $e_\beta\in
    H_{\alpha}$).

We say that  a 0-1 matrix $N$ is a  $k$-\emph{increasing} matrix if it
    has $k$  columns, $k+1$ rows and satisfies:  $N_{i+1,i}=1$ for every
    $i\in [k]$ and  $N_{i,j}=0$ for every $1 \leq i \leq  j \leq k$.  We
    say  that  $N$ is  a  $k$-\emph{decreasing}  matrix  if it  has  $k$
    columns, $k+1$ rows and  satisfies: $N_{i,i}=0$ for every $i\in [k]$
    and $N_{i,j}=1$ for every $1 \leq j < i \leq k$.

\begin{figure}[h]
\[
\begin{array}{ccc}

\begin{array}{c}
\left(
\begin{array}{cccc}
0 	& \cdots 	&\cdots	& 0 	 \\
1 	& \ddots 	&       & \vdots \\
?      	& \ddots 	& \ddots& \vdots \\
\vdots 	& \ddots	& \ddots& 0 	\\
?	& \cdots	& ?     & 1
\end{array}
\right)
\\
\text{Increasing}
\end{array}

&
\begin{array}{c}
\left(
\begin{array}{cccc}
0 	& ?      	&\cdots	& ? 	 \\
1 	& \ddots 	& \ddots& \vdots \\
\vdots 	& \ddots 	& \ddots& ?      \\
\vdots 	&       	& \ddots& 0 	 \\
1	& \cdots	& \cdots& 1
\end{array}
\right)
\\
\text{Decreasing}
\end{array}

\end{array}
\]

\caption{Increasing and decreasing matrices}\label{fig:id}
\end{figure}

Lemma~\ref{lemme1}    can    be     rephrased    as    follows:    Let
    $\mathcal{H}=\{H_1,H_2,  \dots,  H_m\}$   be  a  collection  of  $m$
    distinct  sets.   If  $m  >  \comb{k+l}{l}$ then  one  can  find  in
    $\mathcal{H}$ a  $(k+1)$-increasing matrix or  an $(l+1)$-decreasing
    matrix.

\section{Finding more specific matrices}
\label{sectionR}

As  noted by  Bauslaugh in  his study  of infinite  digraphs, Ramsey's
    famous theorem  may be  combined with Lemma~\ref{lemmeinf}.   In our
    finite  extremal set-theoretic  context, this  gives a  more precise
    idea of the  kind of local structure that can be  found in any large
    enough collection of sets.

For any integer $n\geq 1$  we call $n$-\emph{singleton} matrix the 0-1
     matrix  $S^n$   with  $n$  columns   and  $n+1$  rows   defined  by
     $S^n_{i,j}=1$    if    and    only    if    $i=j+1$.     We    call
     $n$-\emph{co-singleton} matrix the  0-1 matrix $\bar{S}^n$ with $n$
     columns and  $n+1$ rows defined by $\bar{S}^n_{i,j}=0$  if and only
     if $i=j$.  We call  $n$-\emph{monotone} matrix the 0-1 matrix $M^n$
     with $n$ colomns and $n+1$ rows defined by $M^n_{i,j}=1$ iff $i\geq
     j+1$.

\begin{figure} [h]
\[
\begin{array}{ccc}

\begin{array}{c}
\left(
\begin{array}{cccc}
0 	& \cdots 	&\cdots	& 0 	 \\
1 	& 0	 	&\cdots	& 0      \\
0 	& \ddots 	& \ddots& \vdots \\
\vdots 	& \ddots	& \ddots& 0 	\\
0	& \cdots	& 0	& 1
\end{array}
\right)
     \\
\text{Singleton}
\end{array}

&
\begin{array}{c}
\left(
\begin{array}{cccc}
0 	& 1      	&\cdots	& 1      \\
1 	& \ddots 	& \ddots& \vdots \\
\vdots 	& \ddots	& \ddots& 1 	\\
1	& \cdots	& 1	& 0	\\
1 	& \cdots 	&\cdots	& 1
\end{array}
\right)
\\
\text{Co-singleton}
\end{array}

&
\begin{array}{c}
\left(
\begin{array}{cccc}
0 	& \cdots 	&\cdots	& 0 	 \\
1 	& \ddots 	& 	& \vdots \\
\vdots 	& \ddots 	& \ddots& \vdots \\
\vdots 	& 		& \ddots& 0 	\\
1	& \cdots	& \cdots& 1
\end{array}
\right)
\\
\text{Monotone}
\end{array}

\end{array}
\]

\caption{Singleton, co-singleton and monotone matrices}\label{fig:scm}
\end{figure}

Notice  that  $S^1  =  M^1   =  \bar{S}^1$.   If  $n>1$,  then  $S^n$,
    $\bar{S}^n$  and  $M^n$  are  distinct. Every  singleton  matrix  is
    increasing  and  every   co-singleton  matrix  is  decreasing.   The
    matrices which  are both increasing and decreasing  are the monotone
    matrices.  We  call \emph{complementary} of a matrix  $N$ the matrix
    obtained from $N$ by swapping  $0$ and $1$.  Up to rearrangements of
    the  rows and/or  the columns,  the complementary  of  a cosingleton
    matrix is  a singleton  matrix and the  complementary of  a monotone
    matrix is a monotone matrix.

    We are going to find an appropriate singleton, cosingleton or
monotone matrix in any large enough
collection of sets. We
first recall Ramsey's theroem.

\begin{theorem}[Ramsey, see \cite{grh:ramseytheory}]

For any positive integer $r$  there exists a positive integer $n$ such
    that for  every partition $(A_0, A_1)$  of $[n] \choose  2$, one can
    find a subset $A'$ of $[n]$ such that: $\left(|A'|\geq r\right)$ and
    either ${A'\choose2}\subseteq A_0$ or ${A'\choose2}\subseteq A_1$.

\end{theorem}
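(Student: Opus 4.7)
The plan is to establish a stronger asymmetric version of the statement, from which the symmetric version follows immediately. For positive integers $s,t$, let $R(s,t)$ denote the least integer $n$ (if such exists) for which every partition $(A_0,A_1)$ of ${[n]\choose 2}$ yields a subset $A'\subseteq[n]$ with either $|A'|\geq s$ and ${A'\choose 2}\subseteq A_0$, or $|A'|\geq t$ and ${A'\choose 2}\subseteq A_1$. The theorem will follow by taking $n := R(r,r)$ once we show $R(s,t)$ is finite for all $s,t\geq 1$.

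For the base cases, $R(1,t)=R(s,1)=1$ is trivial, since for a single vertex $v$ one has ${\{v\}\choose 2}=\emptyset$ and hence the one-element set is vacuously homogeneous in either color. For the inductive step I would prove
\[
R(s,t)\;\leq\;R(s-1,t)+R(s,t-1),
\]
by induction on $s+t$, which together with the base cases shows $R(s,t)$ is finite. To establish this inequality, set $n=R(s-1,t)+R(s,t-1)$, fix any partition $(A_0,A_1)$ of ${[n]\choose 2}$, and pick any vertex $v\in[n]$. Split the remaining vertices as $X_0=\{u:\{u,v\}\in A_0\}$ and $X_1=\{u:\{u,v\}\in A_1\}$. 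Since $|X_0|+|X_1|=n-1$, at least one of the two inequalities $|X_0|\geq R(s-1,t)$ or $|X_1|\geq R(s,t-1)$ must hold; the two cases are symmetric, so suppose the first.

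Applying the induction hypothesis to the partition restricted to ${X_0\choose 2}$ gives either an $A_1$-homogeneous subset of $X_0$ of size at least $t$, in which case we are done, or an $A_0$-homogeneous subset $B\subseteq X_0$ of size at least $s-1$. In the latter case $B\cup\{v\}$ is an $A_0$-homogeneous set of size at least $s$: pairs inside $B$ lie in $A_0$ by the inductive conclusion, and pairs $\{u,v\}$ with $u\in B\subseteq X_0$ lie in $A_0$ by definition of $X_0$.

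Since Ramsey's theorem is classical, there is no genuine technical obstacle here; the only conceptual subtlety is choosing the right induction quantity. A direct induction on $r$ applied to the symmetric statement does not close, because the recursion inherently produces asymmetric parameters when one vertex is peeled off. This forces the detour through the stronger two-parameter statement $R(s,t)$ and the induction on $s+t$. As a byproduct the argument even gives the quantitative bound $R(s,t)\leq\comb{s+t-2}{s-1}$, although for the purposes of the present paper only the existence of a finite $n$ is needed.
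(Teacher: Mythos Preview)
Your proof is correct: this is the classical Erd\H{o}s--Szekeres argument for the two-colour Ramsey theorem, and the induction on $s+t$ via the inequality $R(s,t)\le R(s-1,t)+R(s,t-1)$ is carried out cleanly.

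There is nothing to compare against, however, because the paper does not prove this theorem at all. Ramsey's theorem is quoted as a known result with a reference to \cite{grh:ramseytheory}, and is then used as a black box inside the proof of Theorem~\ref{lemmealaR}. So your write-up supplies strictly more than the paper does here; the paper's ``proof'' is simply the citation.
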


We denote by $R(r)$ the  Ramsey number, i.e., the smallest integer $n$
     that  satisfies the  claim of  the Ramsey  theorem  (for instance,
     $R(3)=6$).  The  exact value  of $R(r)$ is  not known  in general,
     even for small values of $r$, although some lower and upper bounds
     have been given (see~\cite{grh:ramseytheory}).

\begin{theorem} \label{lemmealaR}
For every non-negative integers $k$ and $l$, there exists a number $S$
    such that for any  collection of sets $\mathcal{H}$ , $|\mathcal{H}|
    > S$  implies that  at least  one of the  following  three propositions
    holds:
\begin{itemize}

\item The $(k+1)$-singleton matrix can be found in $\mathcal{H}$.
\item The $(l+1)$-cosingleton matrix can be found in $\mathcal{H}$.
\item   The  $\min   (k+1,l+1)$-monotone  matrix   can  be   found  in
$\mathcal{H}$.

\end{itemize}

\noindent We denote by   $S(k,l)$ the largest  integer that
does not satisfy the  claim.  We
      have:
$$ S(k,l)=S(l,k)\leq   \comb{R(k+1)+R(l+1)-2}{R(k+1)-1}.$$

\end{theorem}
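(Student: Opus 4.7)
The plan is to combine Lemma~\ref{lemme1} with Ramsey's theorem: Lemma~\ref{lemme1} supplies a long increasing or decreasing configuration, and Ramsey then controls the entries that the increasing/decreasing pattern leaves undetermined. Set $R_k = R(k+1)$ and $R_l = R(l+1)$. Using the identity $\comb{R_k + R_l - 2}{R_k - 1} = \comb{(R_k - 1) + (R_l - 1)}{R_l - 1}$, the hypothesis $|\mathcal{H}| > \comb{R_k + R_l - 2}{R_k - 1}$ allows one to apply Lemma~\ref{lemme1} with parameters $R_k - 1$ and $R_l - 1$, producing in $\mathcal{H}$ either an $R_k$-increasing or an $R_l$-decreasing configuration.

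In the $R_k$-increasing case, one has sets $A_1, \ldots, A_{R_k+1}$ in $\mathcal{H}$ and elements $e_1, \ldots, e_{R_k}$ with $e_s \in A_{s+1}$ and $e_s \notin A_i$ for every $i \leq s$. The entries below the subdiagonal of this matrix are undetermined: for $a < b$ in $[R_k]$, whether $e_a \in A_{b+1}$ is one such entry. The next step is to two-color the pairs $\{a, b\} \subseteq [R_k]$ by this test and, since $R_k = R(k+1)$, apply Ramsey's theorem to extract a monochromatic subset $T = \{t_1 < \cdots < t_{k+1}\}$. Considering then the $k+2$ sets $A_{t_1}, A_{t_1+1}, A_{t_2+1}, \ldots, A_{t_{k+1}+1}$ together with the elements $e_{t_1}, \ldots, e_{t_{k+1}}$, a row-by-row verification shows that the incidence matrix equals exactly $S^{k+1}$ when the monochromatic color is \emph{does not contain} and exactly $M^{k+1}$ when it is \emph{contains}. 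The $R_l$-decreasing case is treated analogously via a sequence $H_1 \supsetneq H_1 \cap H_2 \supsetneq \cdots$ with witnesses $e_s \in H_1 \cap \cdots \cap H_s \setminus H_{s+1}$: the same kind of Ramsey coloring on $[R_l]$, applied to the sets $H_{t_1}, H_{t_1+1}, \ldots, H_{t_{l+1}+1}$, yields the $(l+1)$-cosingleton in one color and, after reversing the orderings of the rows and columns (permitted by the definition of ``found in $\mathcal{H}$''), the $(l+1)$-monotone in the other. Since both $(k+1)$- and $(l+1)$-monotone matrices contain the $\min(k+1, l+1)$-monotone as a submatrix, one of the three desired configurations always appears, giving $S(k, l) \leq \comb{R(k+1) + R(l+1) - 2}{R(k+1) - 1}$.

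The equality $S(k, l) = S(l, k)$ can be established separately by complementation: replacing each $H \in \mathcal{H}$ by $U \setminus H$ for $U = \bigcup \mathcal{H}$ produces a collection $\mathcal{H}'$ of the same cardinality in which singleton and cosingleton matrices are interchanged (up to row reordering) and monotone matrices remain monotone (up to row and column reversal); hence a counterexample for $(k, l)$ yields one for $(l, k)$. The main technical hurdle I foresee is the two case analyses showing that each Ramsey color produces exactly the advertised matrix; these are essentially routine, but in the decreasing case the choice of row and column ordering must be tracked carefully to recover the monotone pattern in its canonical form.
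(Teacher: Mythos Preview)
Your proposal is correct and follows essentially the same route as the paper: apply Lemma~\ref{lemme1} with parameters $R(k+1)-1$ and $R(l+1)-1$ to obtain an $R(k+1)$-increasing or $R(l+1)$-decreasing matrix, two-colour the undetermined entries, invoke Ramsey's theorem to extract a homogeneous set of size $k+1$ (resp.\ $l+1$), and read off a singleton/monotone (resp.\ cosingleton/monotone) submatrix; the symmetry $S(k,l)=S(l,k)$ is handled by complementation in both arguments. Your write-up is in fact somewhat more explicit than the paper's about which rows and columns to select after Ramsey, and about why the $\min(k+1,l+1)$-monotone conclusion suffices.
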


\begin{proof}
Let  $k$  and   $l$  be  in $\N$,  and   consider  a  collection
     $\mathcal{H}$   of  distinct   sets  such   that   $|\mathcal{H}|  >
     \comb{R(k+1)  + R(l+1)-2}{R(k+1)  - 1}$.  By  Lemma~\ref{lemme1}, we
     find  in  $\mathcal{H}$  an  $R(k+1)$-increasing matrix  $N$  or  an
     $R(l+1)$-decreasing matrix $N'$.

In  the  first  case,  let   $A_0$  (resp.~$A_1$)  be  the  subset  of
     $[R(k+1)]\choose 2$ consisting of the $\{i,j\}$'s such that $i>j$
     and $N_{i+1,j}=0$ (resp.~$N_{i+1,j}=1$).  By Ramsey's theorem, we
     can find a subset of  $[R(k+1)]$, say without loss of generality,
     the subset  $[k+1]$ such that  all the pairs in  $[k+1]\choose 2$
     are in $A_0$ or in $A_1$.  If they are in $A_0$, we have found in
     $\mathcal{H}$ a $(k+1)$-singleton matrix.   If they are in $A_1$,
     we  have found  a  $(k+1)$-monotone matrix.  The  second case  is
     similar.

Thus      $S$      exists       and      we      have      $S(k,l)\leq
    \comb{R(k+1)+R(l+1)-2}{R(k+1)-1}.$  The  claim  $S(k,l)=S(l,k)$  is
    clear by complementation.

\end{proof}

Note that an analogue of  Theorem~\ref{lemmealaR} with only two of the
     three cases considered would be  false.  To see this, it suffices to
     consider  the situation  when $\cal  H$ itself  is a  collections of
     singletons,  or a collection  of co-singletons,  or a  collection of
     sets completely ordered by inclusion.

\subsection{Some exact values for  $S(k,l)$}

Since the exact value of the Ramsey number is not known in general, it
     could  seem  hopeless  to   try  to  determine  $S(k,l)$  exactly.
     Nevertheless, for  small values  of $k$ and  $l$, we can  give the
     exact  value of  $S(k,l)$.  It  appears that  the upper  bound for
     $S(k,l)$ using the Ramsey  number is quite generous (for instance,
     it says $S(2,2) \leq {10 \choose 5} = 252$).

The collection  $\comb{[k+l]}{l}$ shows $S(k,l)\geq  {k+l \choose l}$.
     Actually, for  $l=0$, we do have  $S(k,l) = {k+l \choose  l} = 1$
     This simply says that if at least two distinct sets are given, the
     matrix $0 \choose 1$ can be found in them.

If $l=1$, the situation is also simple:

\begin{lemma}
If $l=1$, $S(k,l) = {k+l \choose l} = k+1$ for every $k$ in $\N$.
     \end{lemma}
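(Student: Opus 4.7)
The plan is to prove the equality by establishing the two bounds separately. The lower bound $S(k,1) \geq k+1$ follows from the general construction recalled just before the lemma: the collection $\comb{[k+1]}{1}$ of all $k+1$ singletons of $[k+1]$ contains neither the $(k+1)$-singleton matrix (which needs $k+2$ distinct rows) nor the $2$-cosingleton or $2$-monotone matrices (whose bottom row $(1,1)$ would require a set containing two chosen elements, impossible for a singleton). So the task reduces to the upper bound: any collection $\mathcal{H}$ of at least $k+2$ distinct sets already contains one of the three target matrices.

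Since $|\mathcal{H}| > \comb{k+1}{1}$, Lemma~\ref{lemme1} applied with $l=1$ produces either a $(k+1)$-increasing sequence of $k+2$ sets or a $2$-decreasing sequence of $3$ sets. In the increasing case, write the sequence as $(H_1,\dots,H_{k+2})$ and, for each $i \in [k+1]$, pick a witness $x_i \in (H_1 \cup \cdots \cup H_{i+1}) \setminus (H_1 \cup \cdots \cup H_i)$; these elements are pairwise distinct. The incidence matrix $N$ of $(H_1,\dots,H_{k+2})$ over $(x_1,\dots,x_{k+1})$ then satisfies $N_{r,j}=0$ whenever $r \leq j$ (since $x_j$ avoids $H_1,\dots,H_j$) and $N_{j+1,j}=1$ for each $j$; only the entries with $r \geq j+2$ are unconstrained. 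If every such free entry is $0$, then $N$ is exactly the singleton matrix $S^{k+1}$. Otherwise fix a position $(i+1,j)$ with $j<i$ and $N_{i+1,j}=1$: the rows $H_1, H_{j+1}, H_{i+1}$ restricted to the columns $x_j, x_i$ give the rows $(0,0)$, $(1,0)$, $(1,1)$, which is the $2$-monotone matrix $M^2$.

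In the decreasing case, write the sequence as $(G_1,G_2,G_3)$ and choose $y_1 \in G_1 \setminus G_2$ and $y_2 \in (G_1 \cap G_2) \setminus G_3$; these are distinct because $y_2 \in G_2$ while $y_1 \notin G_2$. The $3 \times 2$ incidence matrix over $(y_1,y_2)$ has rows $(1,1)$, $(0,1)$, $(\ast,0)$ for $G_1,G_2,G_3$, where $\ast = 1$ if $y_1 \in G_3$ and $\ast = 0$ otherwise. When $\ast = 1$, reordering the rows as $(G_2,G_3,G_1)$ yields the cosingleton matrix $\bar{S}^2$; when $\ast = 0$, reordering the rows as $(G_3,G_2,G_1)$ and swapping the two columns yields $M^2$. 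Hence in every case one of the three target matrices appears, giving $S(k,1) \leq k+1$.

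There is no real obstacle here: the decisive observation is that in the increasing case a single $1$ strictly below the sub-diagonal already suffices to extract a monotone pattern, so no Ramsey-type step is needed, which is precisely why the Ramsey-based bound of Theorem~\ref{lemmealaR} is wasteful for $l=1$.
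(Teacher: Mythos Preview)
Your proof is correct and follows essentially the same approach as the paper: apply Lemma~\ref{lemme1} with $l=1$, then observe that a $(k+1)$-increasing matrix which is not already $S^{k+1}$ contains $M^2$, while any $2$-decreasing matrix is either $\bar{S}^2$ or $M^2$. The paper states these two observations without the explicit case-checking you provide, but the argument is the same.
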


\begin{proof}
The proof is easy by a  direct induction on $k$.  We give here another
     proof:   Let  $\mathcal{H}$   be  a   collection  of   sets.   If
     $|\mathcal{H}|  > k+1$,  we  want to  find  in $\mathcal{H}$  the
     $(k+1)$-singleton  matrix,   the  2-cosingleton  matrix   or  the
     2-monotone   matrix.    By    Lemma~\ref{lemme1}   we   find   in
     $\mathcal{H}$   a  $(k+1)$-increasing  matrix   (case  1)   or  a
     2-decreasing  matrix  (case 2).   In  case  1,  if by  fluke  the
     $(k+1)$-increasing matrix is  the $(k+1)$-singleton matrix we are
     done.  If  not, we find  in $\mathcal{H}$ the  2-monotone matrix.
     In case 2, we are done  since a 2-decreasing matrix is either the
     2-cosingleton matrix or the 2-monotone matrix.
\end{proof}

As it is  true for $l=0$ and $l=1$, one  could think that $S(k,l)={k+l
     \choose l}$ in  general.  But this is false  for $k=l=2$: the matrix
     $F$ below shows that $S(2,2)\geq 8$.  Indeed, $F$ should be seen as the
     incidence  matrix of eight  distinct sets  over four  elements.  The
     point is  that $S^3$, $\bar{S}^3$ or $M^3$  are not submatrices
     of $F$ even after rearranging the rows and the columns.

\[
F= \left(
\begin{array}{cccc}
1000\\
0100\\
1101\\
1110\\
0011\\
0110\\
1001\\
1100
\end{array} \right)
\]

Actually, we proved  that $S(2,2)=8$. The proof  is long, and
all the details are in the appendix.

\newcommand{\je}[3]{\ensuremath{{c_#1}\rightarrow[r_#2 \simeq r_#3]}}

\subsection{A lower  bound for $S(l,l)$}

The exact value  of $S(k,l)$ in general seems  difficult to determine.
     We already noted that $S(k,l) \geq {k+l \choose l}$. Better bounds
     can be found.

\begin{proposition} \label{pro2}
For $l\geq 2$,   $S(l,l) \geq {2l \choose l} +
     {2l-3 \choose l-1}$.
\end{proposition}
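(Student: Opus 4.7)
The plan is to exhibit an explicit collection $\mathcal{H}$ of exactly $\comb{2l}{l}+\comb{2l-3}{l-1}$ distinct sets which contains none of $S^{l+1}$, $\bar{S}^{l+1}$, $M^{l+1}$; by the definition of $S(l,l)$ this will establish the claimed lower bound. The starting point is the observation that the family $\comb{[2l]}{l}$ of all $l$-subsets of $[2l]$ has $\comb{2l}{l}$ members and already avoids the three forbidden matrices, because $|[2l]\setminus E|=l-1$ for every $(l+1)$-element $E\subseteq[2l]$---no $l$-set can be disjoint from such an $E$ (killing $S^{l+1}$), and no $l$-set can have trace of size $l+1$ on such an $E$ (killing $\bar{S}^{l+1}$ and $M^{l+1}$). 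The task is therefore to adjoin $\comb{2l-3}{l-1}$ carefully chosen sets without allowing any of the three matrices to form.

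Naively adding $(l-1)$- or $(l+1)$-subsets of $[2l]$ would immediately introduce $S^{l+1}$ or $\bar{S}^{l+1}$, so the extras must be of size $l$ and use fresh ground-set elements outside $[2l]$. A natural choice is to distinguish a $(2l-3)$-element subset of $[2l]$ together with a pair $\{u,v\}$ of auxiliary elements not in $[2l]$, and to parametrise the extras by the $(l-2)$-subsets of the distinguished subset---of which there are $\comb{2l-3}{l-2}=\comb{2l-3}{l-1}$---taking each extra to be the $l$-set $A\cup\{u,v\}$ for such an $A$. The extras are then pairwise distinct (different $A$'s) and distinct from the base (they contain $u,v$, none of the base does), so $|\mathcal{H}|=\comb{2l}{l}+\comb{2l-3}{l-1}$ as required. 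With every member of $\mathcal{H}$ of size $l$, the size argument ruling out $M^{l+1}$ and $\bar{S}^{l+1}$ applies verbatim.

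The substantive work will be to verify that no $S^{l+1}$ occurs in $\mathcal{H}$. I would argue by contradiction, supposing an $S^{l+1}$ with rows $H_1,\dots,H_{l+2}$ and column-set $E=\{e_1,\dots,e_{l+1}\}$, and splitting on whether the empty-trace row $H_1$ lies in $\comb{[2l]}{l}$ or among the extras. In the former case, $|H_1|=l$ and $H_1\subseteq[2l]\setminus E$ force $|E\cap[2l]|\leq l$, so $E$ must meet $\{u,v\}$; one then checks that no row in $\mathcal{H}$ can realise the singleton trace $\{e_i\}$ for an $e_i\in\{u,v\}\cap E$. In the latter case, the presence of $u,v$ in $H_1$ together with $H_1\cap E=\emptyset$ forces $E\subseteq[2l]$ with $E$ concentrated in the three elements of $[2l]$ outside the distinguished $(2l-3)$-set, and the construction is designed so that at least one element of this forced triple admits no row in $\mathcal{H}$ with singleton trace on $E$. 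The main obstacle will be the delicate second case: one must verify that every choice of extra $H_1$ and of the forced $E$ leaves some $e_i\in E$ without a valid $H_i$, which is precisely where the interplay between the auxiliary pair $\{u,v\}$ and the distinguished $(2l-3)$-subset of $[2l]$ does the combinatorial work; any slack in this choice would either allow $S^{l+1}$ to form (if too many rows qualify as singleton-trace witnesses) or would force us to remove base sets (losing the net gain), so the construction must be tuned exactly.
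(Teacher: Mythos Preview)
Your construction does not work: the family $\mathcal{H}$ you describe \emph{does} contain $S^{l+1}$, already for $l=2$ and in fact for every $l\ge 2$.

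Take the distinguished $(2l-3)$-set to be $D=\{1,\dots,2l-3\}$ and let $E=\{l,l+1,\dots,2l\}\cup\{u\}$, so $|E|=l+1$ and $v\notin E$. Then $H_1=\{1,\dots,l\}\setminus\{l\}\cup\{$any missing element$\}$---more simply, $H_1=[2l]\setminus(E\cap[2l])=\{1,\dots,l-1\}$\dots wait, let me give the cleanest explicit instance for $l=3$, with $D=\{1,2,3\}$ and extras $\{1,u,v\},\{2,u,v\},\{3,u,v\}$: choose $E=\{4,5,6,u\}$ and
\[
H_1=\{1,2,3\},\quad H_2=\{1,2,4\},\quad H_3=\{1,2,5\},\quad H_4=\{1,2,6\},\quad H_5=\{1,u,v\}.
\]
These five sets are pairwise distinct members of $\mathcal{H}$ and their traces on $E$ are $\emptyset,\{4\},\{5\},\{6\},\{u\}$ respectively, which is exactly $S^4$. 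The same pattern works for all $l$: put $E=\{l+1,\dots,2l,u\}$, take $H_1=\{1,\dots,l\}$, take $H_{j+1}=\{1,\dots,l-1,l+j\}$ for $j=1,\dots,l$, and take $H_{l+2}=\{1,\dots,l-2\}\cup\{u,v\}$ (this is an extra since $\{1,\dots,l-2\}\subseteq D$).

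The flaw is precisely in your Case~1 when only one of $u,v$ lies in $E$. You assert that ``no row in $\mathcal{H}$ can realise the singleton trace $\{e_i\}$ for an $e_i\in\{u,v\}\cap E$'', but if $u\in E$ and $v\notin E$ then any extra $A\cup\{u,v\}$ with $A\cap E=\emptyset$ has trace exactly $\{u\}$. Such an $A$ exists whenever $D\setminus E$ has at least $l-2$ elements, and the adversary choosing $E$ can always arrange this by placing $E\cap[2l]$ mostly outside $D$. No choice of the distinguished set $D$ rescues the argument, since for any $(l-2)$-subset $A\subseteq[2l]$ there is an $l$-subset of $[2l]$ disjoint from it.

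For comparison, the paper's construction is quite different in spirit: it stays inside the ground set $[2l]$ but uses sets of \emph{varying} sizes (from $1$ up to $l+1$), organised into blocks $A_0,\dots,A_{l-2},B,C,D$ whose members are constrained to contain or omit the special elements $1$, $2l-1$, $2l$. Your intuition that adding $(l\pm1)$-sets ``would immediately introduce $S^{l+1}$ or $\bar{S}^{l+1}$'' is too pessimistic; the paper does add such sets, and the interplay between the forced memberships in the different blocks is exactly what blocks all three patterns.
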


\begin{proof}
    We consider the collection ${\cal H} = A_0 \cup
     A_1  \cup   \dots  \cup  A_{l-2}  \cup   B  \cup  C   \cup  D$, with:

\begin{eqnarray*}
A_i & = & \left\{ H\in {[2l] \choose i+1} \text{ s.t. } 2l \in H \text{ and }
H\backslash\{2l\} \in {[l+i-1] \choose i} \right\} \\
B & = & \left\{ H\in {[2l] \choose l} \text{ s.t. } 1 \in H \text{ and }
2l-1 \notin H \text{ and }
2l \in H \right\} \\
C & = & {[2l-1] \choose l} \\
D & = & \left\{ H\in {[2l] \choose l+1} \text{ s.t. } 1 \in H \text{ and }
2l \in H \right\}
\end{eqnarray*}

The transpose  of the  incidence matrix of  ${\cal H}$ for  $l=3$ is
    given in figure~\ref{fig23}.

We  have  $\displaystyle  |{\cal  H}|=\left(  \sum_{i=0}^{l-2}  {l+i-1
    \choose i} \right)+  { 2l-3 \choose l-2} +{ 2l-1  \choose l} + {2l-2
    \choose  l-1}$.  Since $  \sum_{i=0}^{l-2}  {l+i-1 \choose  i}={2l-2
    \choose l}$,  we obtain that $|{\cal  H}|= { 2l-3  \choose l-1} +{2l
    \choose  l}$.  We  now  prove  that  $S^{l+1}$,  $\bar{S}^{l+1}$  or
    $M^{l+1}$ cannot be found in ${\cal H}$.

Assume that we can find $M^{l+1}$ in ${\cal H}$. Then we can find sets
    $H_1$  and   $H_2$  in  ${\cal   H}$  and  an   increasing  sequence
    $c_1$,...,$c_{l+1}$  in  $[2l]$ s.t.  for  each $k=1,...,l+1$,  $c_k
    \notin H_1$  and $c_k \in H_2$.  $|H_2|\geq l+1$ gives  $H_2 \in D$,
    and   $c_{l+1}=2l$.   But   $|H_1|   \leq  l-1$   gives   $H_1   \in
    \cup_{i=0}^{l-2}  A_i$, and  we  have a  contradiction since  $2l\in
    H_1$.

Assume now that  we can find $\bar{S}^{l+1}$ in  ${\cal H}$. Denote by
    $H$ in ${\cal H}$ the set  corresponding to the rows with all 1's in
    $\bar{S}^{l+1}$,  and  by  $c_1<c_2<...<  c_{l+1}$ the  elements  in
    $[2l]$ corresponding to these 1's.   We have $|H|\geq l+1$, hence $H
    \in  D$,  $c_1=1$ and  $c_{l+1}=2l$.  We then  have  a  set $H'$  in
    $\cal{H}$ s.t. $|H'|\geq l$, 1 $\notin H'$ and $2l \in H'$. But such
    a set does not exist.

Finally assume  that we can  find $S^{l+1}$ in $\cal{H}$.   Then there
    exist  sets  $H$, $H_1$,  ...,  $H_{l+1}$  in $\cal{H}$,  elements
    $c_1<c_2<...<c_{l+1}$    in    $[2l]$    such   that    $H    \cap
    \{c_1,...,c_{l+1}\}=  \emptyset$ and  for $j$  in $\{1,...,l+1\}$,
    $H_j \cap  \{c_1,...,c_{l+1}\}= \{c_j\}$.  We  have $|H|\leq l-1$,
    thus  $H\in \cup_{i=0}^{l-2}A_i$,  $2l \in  H$ and  $c_{l+1} <2l$.
    For each $j=1,...,l+1$, $H_j$ has  at least $l$ 0's in the columns
    $1,...,2l-1$, hence $H_j  \notin D$ and $H_j \notin  C$.  Since no
    set in  $\cup_{i=0}^{l-2}A_i \cup  B$ contains $2l-1$,  this imply
    that $c_{l+1} \neq 2l-1$ and  for each $j=1,...,l+1$, $H_j$ has at
    least $l$ 0's  in the columns $1,...,2l-2$, hence  $H_j \notin B$.
    We  have obtained  that  for each  $j$  in $\{1,...,l+1\}$,  there
    exists a  (necessarily unique) $i_j$ in  $\{0,...,l-2\}$ such that
    $H_j \in  A_{i_j}$.  Fix  $j$ with maximum  index $i_j$.   We have
    $c_{l+1} \leq l+i_j  -1 \leq 2l-3$.  Hence $H_j$  has at least $l$
    0's in the columns $1,...,l+i_j-1$. But $H_j \backslash \{2l\} \in
    {[l+i_j-1] \choose i_j}$, hence a contradiction.

\newcommand{\ro}[1]{\begin{rotate}{0}{#1}\end{rotate}}
{\setlength{\arraycolsep}{0.5em}
\begin{figure}
\[
\begin{array}{cccccc}
\begin{array}{c}
\begin{array}{c}
\ro6\\
\ro5\\
\ro4\\
\ro3\\
\ro2\\
\ro1
\end{array}
\\
\mbox{}
\end{array}&
\begin{array}{c}
\begin{array}{c}
\ro1\\
\ro0\\
\ro0\\
\ro0\\
\ro0\\
\ro0
\end{array}
\\
A_0
\end{array}&
\begin{array}{c}
\begin{array}{ccc}
\ro1&\ro1&\ro1\\
\ro0&\ro0&\ro0\\
\ro0&\ro0&\ro0\\
\ro1&\ro0&\ro0\\
\ro0&\ro1&\ro0\\
\ro0&\ro0&\ro1
\end{array}
\\
A_1
\end{array}&
\begin{array}{c}
\begin{array}{ccc}
\ro1&\ro1&\ro1\\
\ro0&\ro0&\ro0\\
\ro1&\ro0&\ro0\\
\ro0&\ro1&\ro0\\
\ro0&\ro0&\ro1\\
\ro1&\ro1&\ro1
\end{array}
\\
B
\end{array}&
\begin{array}{c}
\begin{array}{cccccccccc}
\ro0&\ro0&\ro0&\ro0&\ro0&\ro0&\ro0&\ro0&\ro0&\ro0\\
\ro1&\ro1&\ro1&\ro0&\ro1&\ro1&\ro0&\ro1&\ro0&\ro0\\
\ro1&\ro1&\ro0&\ro1&\ro1&\ro0&\ro1&\ro0&\ro1&\ro0\\
\ro1&\ro0&\ro1&\ro1&\ro0&\ro1&\ro1&\ro0&\ro0&\ro1\\
\ro0&\ro1&\ro1&\ro1&\ro0&\ro0&\ro0&\ro1&\ro1&\ro1\\
\ro0&\ro0&\ro0&\ro0&\ro1&\ro1&\ro1&\ro1&\ro1&\ro1
\end{array}
\\
C
\end{array}&
\begin{array}{c}
\begin{array}{cccccc}
\ro1&\ro1&\ro1&\ro1&\ro1&\ro1\\
\ro1&\ro1&\ro0&\ro1&\ro0&\ro0\\
\ro1&\ro0&\ro1&\ro0&\ro1&\ro0\\
\ro0&\ro1&\ro1&\ro0&\ro0&\ro1\\
\ro0&\ro0&\ro0&\ro1&\ro1&\ro1\\
\ro1&\ro1&\ro1&\ro1&\ro1&\ro1
\end{array}
\\
D
\end{array}
\end{array}
\]
\caption{Incidence matrix of $\cal H$}
\label{fig23}
\end{figure}
}

\end{proof}

\section{An exact bound for subsets of $[k+l]$}
\label{sectionF}

F\"uredi and Tuza  gave a theorem that, in a  sense, is an improvement
    of  Lemma~\ref{lemme1}.   It  states   that  if  all  the  sets  are
    ``small'', a very special increasing matrix (a singleton matrix) can
    be found. In what follows, $k$ and $l$ are non-negative integers.

\begin{theorem}[F\"uredi, Tuza \cite{furedi:hypergraphs}] \label{thfuredi}
Let  $\mathcal{H}$ be a  collection of  distinct sets  $H_1, H_2, \dots,
      H_m$.  If  $m >  {k+l\choose l}$  and if we  have $|H_i|\leq  l$ for
      every  $i$, then we  can find  in $\mathcal{H}$  a $(k+1)$-singleton
      matrix.
\end{theorem}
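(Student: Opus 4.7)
The plan is to proceed by induction on $k+l$, in direct analogy with the proof of Lemma~\ref{lemme1}. The base cases $l=0$ (only the empty set has size at most $0$, so $m>1$ is impossible) and $k=0$ (any two distinct sets yield $S^1$ on any witness element) are immediate. For the inductive step, assume $k,l\geq 1$ and pick an element $x\in(\bigcup_i H_i)\setminus(\bigcap_i H_i)$; this is possible because the $H_i$ are distinct and $m\geq 2$. Partition $\mathcal{H}=\mathcal{H}_x\sqcup\mathcal{H}_{\bar{x}}$ with cardinalities $m_1,m_2$. Pascal's identity $\comb{k+l}{l}=\comb{k+l-1}{l-1}+\comb{k+l-1}{l}$ forces $m_1>\comb{k+l-1}{l-1}$ or $m_2>\comb{k+l-1}{l}$.

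The easy case is $m_1>\comb{k+l-1}{l-1}$. The family $\mathcal{H}'_x=\{H\setminus\{x\}:H\in\mathcal{H}_x\}$ consists of $m_1$ distinct sets of size at most $l-1$, so by the inductive hypothesis with parameters $(k,l-1)$ it contains a $(k+1)$-singleton matrix. Its $k+1$ distinguishing elements lie in $\bigcup\mathcal{H}'_x$, so none of them equals $x$; adding $x$ back to each row therefore preserves the singleton pattern and produces the desired matrix in $\mathcal{H}_x\subseteq\mathcal{H}$.

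The harder case, and the main obstacle, is $m_2>\comb{k+l-1}{l}=\comb{(k-1)+l}{l}$. Applying the inductive hypothesis with parameters $(k-1,l)$ to $\mathcal{H}_{\bar{x}}$ yields a $k$-singleton matrix with sets $G_0,G_1,\dots,G_k$ and distinguishing elements $e_1,\dots,e_k$, all avoiding $x$. I would try to promote this to a $(k+1)$-singleton matrix by taking $x$ itself as the new distinguishing element, which works precisely when some $G_{k+1}\in\mathcal{H}_x$ satisfies $G_{k+1}\cap\{e_1,\dots,e_k\}=\emptyset$.

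If no such $G_{k+1}$ exists, then every set of $\mathcal{H}_x$ hits $\{e_1,\dots,e_k\}$. The plan would then be to bound each subfamily $\mathcal{H}_{x,e_j}=\{H\in\mathcal{H}:x,e_j\in H\}$ by the inductive hypothesis applied after stripping both $x$ and $e_j$, yielding $|\mathcal{H}_{x,e_j}|\leq\comb{k+l-2}{l-2}$ and hence $m_1\leq k\comb{k+l-2}{l-2}$, and combine this with a sharpening of the inductive statement (of the form \emph{``for any prescribed element $x$, there is a $(k+1)$-singleton matrix either avoiding $x$ or using $x$ as a distinguishing column''}) to produce the required contradiction with $m>\comb{k+l}{l}$. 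Finding the right strengthened inductive statement that closes this extension step, without overshooting the bound $\comb{k+l}{l}$, is where I expect the main technical difficulty to lie.
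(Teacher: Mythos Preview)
Your proposal has a genuine gap, and the fix you sketch does not close it. In the hard case you obtain a $k$-singleton on elements $e_1,\dots,e_k$ inside $\mathcal{H}_{\bar x}$ and then, when no set of $\mathcal{H}_x$ avoids $\{e_1,\dots,e_k\}$, you propose to bound $m_1\le k\comb{k+l-2}{l-2}$. But this bound is too weak: for $k=l=3$ it gives $m_1\le 12$, while $\comb{k+l-1}{l-1}=\comb{5}{2}=10$, so no contradiction with $m>\comb{k+l}{l}$ follows. More generally $\frac{k\comb{k+l-2}{l-2}}{\comb{k+l-1}{l-1}}=\frac{k(l-1)}{k+l-1}$, which exceeds $1$ as soon as $k,l\ge 3$. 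The vague ``strengthened inductive statement'' you allude to would have to carry essentially the full strength of the skew Bollob\'as inequality, and there is no reason to expect an elementary element-by-element induction to reach the tight bound $\comb{k+l}{l}$.

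The paper does not give a self-contained proof of this theorem; it cites F\"uredi and Tuza and states that their proof rests on the Frankl--Kalai theorem (Theorem~\ref{thfrankl}). The actual argument, which you can see carried out in the first half of the proof of Theorem~\ref{notth}, is entirely different from your induction. One orders the $H_i$ arbitrarily and, for each $i$, takes a \emph{smallest} set $B_i$ disjoint from $H_i$ that meets every $H_j$ not contained in $H_i$. If some minimal $B_i$ has size at least $k+1$, then by minimality each element $e_h\in B_i$ is the unique element of $B_i$ in some $H_{i_h}$; the sets $H_i,H_{i_1},\dots,H_{i_{k+1}}$ over $e_1,\dots,e_{k+1}$ then form $S^{k+1}$. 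Otherwise every $B_i$ has size at most $k$, the pairs $(H_i,B_i)$ satisfy the hypotheses of Theorem~\ref{thfrankl} (with $|H_i|\le l$, $|B_i|\le k$, $H_i\cap B_i=\emptyset$, and $H_j\cap B_i\neq\emptyset$ for $j>i$), and Frankl--Kalai gives $m\le\comb{k+l}{l}$. So the key idea you are missing is not a sharper induction but the minimal-transversal construction together with the (non-elementary) set-pair inequality.
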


The proof  of that  theorem is based  on the following  theorem proved
     independently by Frankl and Kalai:

\begin{theorem}[Frankl \cite{frankl:two}; Kalai
\cite{kalai:intersection}] \label{thfrankl}

Let $A_1, A_2, \dots,  A_m$ be sets of size at most  $l$ and let $B_1,
     B_2,  \dots,  B_m$  be  sets  of  size at  most  $k$  with  $A_i\cap
     B_i=\emptyset$.   Suppose  that   $A_i\cap  B_j\neq  \emptyset$  for
     all $i>j$. Then $m\leq \comb{k+l}{l}$.

\end{theorem}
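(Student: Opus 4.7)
The plan is to follow Kalai's exterior-algebra argument. First I would reduce to the case where $|A_i|=l$ and $|B_i|=k$ for every $i$, by padding each set with ``dummy'' elements drawn from a reservoir disjoint from everything already in play and chosen to be distinct across all sets. This clearly preserves both $A_i \cap B_i = \emptyset$ and $A_i \cap B_j \neq \emptyset$ for $i>j$, since the new elements are not in any $B_j$ (resp.~$A_j$) of the original configuration.

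Next, let $X$ denote the (enlarged) ground set and assign to each $x\in X$ a vector $v_x \in \R^{k+l}$ in \emph{general position}, meaning that any $k+l$ of these vectors are linearly independent. For each $i$, define
\[
\alpha_i \;=\; \bigwedge_{x \in A_i} v_x \;\in\; \Lambda^l \R^{k+l}, \qquad
\beta_i \;=\; \bigwedge_{x \in B_i} v_x \;\in\; \Lambda^k \R^{k+l}.
\]
The wedge $\alpha_i \wedge \beta_j$ lives in the one-dimensional top exterior power $\Lambda^{k+l}\R^{k+l}$, and by general position it vanishes if and only if the $k+l$ vectors indexed by $A_i \cup B_j$ are linearly dependent, i.e.\ if and only if $A_i \cap B_j \neq \emptyset$. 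So the hypotheses translate to: $\alpha_i \wedge \beta_i \neq 0$ for every $i$, while $\alpha_i \wedge \beta_j = 0$ whenever $i > j$.

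Then I would show that $\alpha_1,\dots,\alpha_m$ are linearly independent in $\Lambda^l \R^{k+l}$. Given a relation $\sum_i c_i \alpha_i = 0$, let $j_0$ be the smallest index with $c_{j_0}\neq 0$. Wedging with $\beta_{j_0}$ annihilates every term: those with $i < j_0$ because $c_i=0$, and those with $i>j_0$ by the skew intersection hypothesis. What remains is $c_{j_0}\,\alpha_{j_0}\wedge\beta_{j_0}=0$, contradicting $\alpha_{j_0}\wedge\beta_{j_0}\neq 0$. Since $\dim \Lambda^l \R^{k+l} = \comb{k+l}{l}$, we conclude $m \leq \comb{k+l}{l}$.

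The main delicate point is the general-position step: one must guarantee that for every pair $(i,j)$ with $A_i \cap B_j = \emptyset$, the $k+l$ vectors indexed by $A_i \cup B_j$ are linearly independent. This is a standard genericity argument (e.g.~take the $v_x$ on the moment curve with distinct parameters, or with algebraically independent coordinates), but it is where most of the care lies. I remark that a ``random-permutation'' proof in the style of the original non-skew Bollob\'as argument does \emph{not} adapt here: under the one-sided hypothesis, the events ``all of $A_i$ precedes all of $B_i$'' are no longer pairwise disjoint, so summing their probabilities does not yield the bound.
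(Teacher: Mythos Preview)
Your argument is correct; it is essentially Kalai's original exterior-algebra proof from~\cite{kalai:intersection}. Note, however, that the paper does not actually prove Theorem~\ref{thfrankl}: it is quoted as a known result of Frankl and Kalai and then used as a black box in the proof of Theorem~\ref{notth}. So there is no ``paper's own proof'' to compare against here. Your write-up would serve perfectly well as a self-contained justification, and your closing remark is apt: Frankl's proof in~\cite{frankl:two} is not the random-permutation argument either, but a different (polynomial/linear-algebraic) method, precisely because the one-sided hypothesis breaks the disjointness of events that the classical Bollob\'as probabilistic proof relies on.
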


The ideas of F\"uredi and Tuza can  be be used to provide a new result
     which  looks like Theorem~\ref{lemmealaR} except  that here,  we do
     have an exact bound as proved by  the collection $\comb{[k+l]}{l}$:

\begin{theorem} \label{notth}
Let $\mathcal{H}$  be a collection  of distinct sets $H_1,  H_2, \dots
     H_m$, all  of them  included in $[k+l]$.   If $m  > \comb{k+l}{l}$
     then at least one of the following three conditions is true:

\begin{enumerate}

\item The $(k+1)$-singleton  matrix can be found in  the collection of
the sets of $\mathcal{H}$ that have at most $l$ elements.

\item The $(l+1)$-cosingleton matrix can be found in the collection of
the sets of $\mathcal{H}$ that have at least $l+1$ elements.

\item For some  $i\neq j$, $H_j\subset H_i$, $|H_j|\leq  l$ and $|H_i|
\geq l+1$.

\end{enumerate}

\end{theorem}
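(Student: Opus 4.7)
Assume for contradiction that $m > \binom{k+l}{l}$ and that none of (1), (2), (3) holds. I will build, for each $H \in \mathcal{H}$, a pair of disjoint sets $(A_H, B_H) \subseteq [k+l]$ with $|A_H| \leq l$ and $|B_H| \leq k$, together with an ordering $H_1, \dots, H_m$ of $\mathcal{H}$, such that $A_{H_i} \cap B_{H_j} \neq \emptyset$ for every $i > j$. Theorem~\ref{thfrankl} will then force $m \leq \binom{k+l}{l}$, a contradiction.

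Split $\mathcal{H} = \mathcal{H}_s \cup \mathcal{H}_b$ according to whether $|H| \leq l$ or $|H| \geq l+1$, and order the sets with $\mathcal{H}_s$ first (by increasing size) and then $\mathcal{H}_b$ (also by increasing size). For $H \in \mathcal{H}_s$ set $A_H := H$ and choose $B_H$ to be a $k$-subset of $[k+l]\setminus H$; for $H \in \mathcal{H}_b$ set $B_H := [k+l]\setminus H$ (of size $\leq k-1$) and choose $A_H$ to be an $l$-subset of $H$. Disjointness $A_H \cap B_H = \emptyset$ is automatic and the size constraints are satisfied; the real work is in enforcing the skew intersection condition.

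The within-$\mathcal{H}_s$ case (both $H_i, H_j \in \mathcal{H}_s$, $i > j$) is the heart of F\"uredi and Tuza's argument behind Theorem~\ref{thfuredi}: the failure of (1) is exactly the hypothesis ensuring that the $B_H$'s can be chosen so that every later $H_i \in \mathcal{H}_s$ meets each earlier $B_{H_j}$. The within-$\mathcal{H}_b$ case is handled dually by complementation: the family $\{[k+l]\setminus H : H \in \mathcal{H}_b\}$ consists of sets of size $\leq k-1$, and complementation interchanges the $(l+1)$-cosingleton with the $(l+1)$-singleton configuration, so the failure of (2) becomes the F\"uredi--Tuza hypothesis on this complementary family and supplies the $A_H \subseteq H$ of size $l$ for each $H \in \mathcal{H}_b$. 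The cross case---$H_i \in \mathcal{H}_b$, $H_j \in \mathcal{H}_s$, $i > j$---requires $A_i \cap B_j \neq \emptyset$; here the failure of (3) enters decisively: no small set sits inside a big one, and this antichain condition is what lets the independently built $A_i \subseteq H_i$ and $B_j \subseteq [k+l]\setminus H_j$ be coordinated so that they share an element of the nonempty set $H_i \setminus H_j$.

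The main technical obstacle is orchestrating the two F\"uredi--Tuza constructions so that they cooperate across classes. The within-class constructions are a priori independent, and the delicate point is to use condition (3), together with the prescribed ordering on $\mathcal{H}$, to force the cross-class witnesses actually to meet---this is where all three failures combine to yield a configuration to which Theorem~\ref{thfrankl} can be applied.
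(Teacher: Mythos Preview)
Your plan is essentially the paper's own proof: the same contrapositive, the same split into small ($|H|\le l$) and big ($|H|\ge l+1$) sets, the same assignment $A_H=H$, $|B_H|=k$ for small $H$ and $B_H=\overline{H}$, $|A_H|=l$ for big $H$, the same within-class use of the F\"uredi--Tuza construction (directly on $\mathcal{H}_s$, by complementation on $\mathcal{H}_b$), and the same appeal to Theorem~\ref{thfrankl} at the end.

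The one point where you diverge from the paper is in your assessment of the cross case. You flag it as ``the main technical obstacle'' and speak of ``orchestrating'' the two constructions so that $A_i$ and $B_j$ ``be coordinated''. In fact no coordination is needed at all: the cross case is automatic from the \emph{exact} sizes you have already insisted on. If $H_i\in\mathcal{H}_b$, $H_j\in\mathcal{H}_s$, and $A_i\cap B_j=\emptyset$, then since $|A_i|=l$, $|B_j|=k$, and both lie in $[k+l]$, the pair $(A_i,B_j)$ partitions $[k+l]$. Hence $H_j=A_j\subseteq [k+l]\setminus B_j=A_i\subseteq H_i=\overline{B_i}$, contradicting the failure of (3). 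So the ``delicate point'' you anticipate is a two-line counting argument, and the within-class F\"uredi--Tuza constructions can be carried out completely independently.
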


\begin{proof}

Let $\mathcal{H}$  be a collection  of distinct sets $H_1,  H_2, \dots
    H_m$, all  of them included  in $[k+l]$, and  such that none  of the
    three  conditions 1,  2, 3  hold.  We are  going to  show $m  \leq
    \comb{k+l}{l}$, thus  proving the  theorem.  If $H$  is a  subset of
    $[k+l]$, $\overline{H}$ denotes the complementary of $H$ in $[k+l]$.

Suppose  w.l.o.g.   that for  every  $i\leq  j$  we have  $|H_i|  \leq
|H_j|$. Let $n$ be the integer such that: $|H_1| \leq l, |H_2| \leq l,
\dots,   |H_n|   \leq  l,   |\overline{H_{n+1}}|   \leq  k-1,   \dots,
|\overline{H_{m}}| \leq k-1$. Note that $n$ may be $0$ or $m$.

Let:    $A_1=H_1$,   $A_2=H_2$,    \dots,   $A_n=H_n$,    $B_{n+1}   =
    \overline{H_{n+1}}$, \dots, $B_m = \overline{H_{m}}$.

For every set  $A_i=H_i$, $i\leq n$, we claim that  we can construct a
    set $B_i$ such that $A_i\cap B_i  = \emptyset$, $|B_i| = k$ and for
    every  $j$, $(1 \leq  i <  j \le  n \Rightarrow  A_j \cap  B_i \neq
    \emptyset )$.  Indeed, consider a smallest set $B_i$ such that $B_i
    \cap A_i =  \emptyset$ and such that for every  $A_j$ (with $j \leq
    n$) not included in $A_i$: $B_i \cap A_j \neq \emptyset$. Note that
    $B_i$ exists and that $1 \leq i  < j \le n \Rightarrow A_j \cap B_i
    \neq \emptyset$. So,  if $|B_i| = k$, we are  done.  If $|B_i|< k$,
    we are done easily by  completing $B_i$ with elements not in $A_i$.
    If $|B_i|  \ge k+1$, let $B_i  = \{e_1, \dots,  e_{k+1}, \dots \}$.
    By  minimality,  for  every  $h  \in [k+1]$,  there  exists  a  set
    $A_{i_h}$  such that $B_i  \cap A_{i_h}  \neq \emptyset$  and $(B_i
    \backslash  \{e_{i_h}\})  \cap A_{i_h}  =  \emptyset$.  Hence,  the
    incidence  matrix of  the sets  $A_i, A_{i_1},  \dots, A_{i_{k+1}}$
    over  the elements  $e_1, \dots,  e_{k+1}$ is  the  $k+1$ singleton
    matrix, contradicting  the fact that condition~1 does  not hold for
    $\cal H$.

For every set  $B_i = \overline{H_i}$, $i \geq n+1$,  we claim that we
    can construct  a set  $A_i$ such that  $B_i \cap A_i  = \emptyset$,
    $|A_i| = l$ and for every $j$, $(n < j < i \Rightarrow A_i \cap B_j
    \neq \emptyset )$.  If not, as in the preceeding paragraph, we find
    an  $(l+1)$-singleton  matrix in  the  collection  of the  $B_i$'s,
    $i\geq    n+1$.   Thus,   by    complementation,   we    find   the
    $l+1$-cosingleton matrix  in $\cal H$, contradicting  the fact that
    condition~2 does not hold for $\cal H$.

Finally,  we claim  that if  $m\geq i\geq  n+1> n  \geq j\geq  1$, then
    $A_i\cap B_j \neq  \emptyset$.  Suppose not, and let  us consider $m
    \geq i  \geq n+1$ and $n  \geq j \geq 1$  such that $A_i  \cap B_j =
    \emptyset$.   Since  $|A_i| =  l$  and $|B_j|  =  k$,  we know  that
    $(A_i,B_j)$ is a  partition of $[k+l]$. Since $A_j \cap B_j = $
$\emptyset=$ $A_i \cap B_i$, we have $A_j \subset A_i
\subset \overline{B_i}$.   Since
$H_j  =  A_j$  and
$H_i  =  \overline{B_i}$, we  obtain $H_j \subset H_i$,  contradicting the
 fact
    that condition~3 does not hold for $\cal H$.

Theorem~\ref{thfrankl},  and the sets $A_i$ and  $B_i$ imply $m\leq
     \comb{k+l}{l}$.

\end{proof}

Note  that in  the case  $k=l$,  Theorem~\ref{notth}  is not  an immediate
     consequence of Sperner's lemma, which  states that in any collection of
     ${2n \choose n}+1$ subsets of  $[2n]$ one can find a subset included
     in  another one  (see~\cite{lint:combinatorics:extremal}).  Indeed,
     Sperner's lemma says nothing about the size of the two subsets.

\appendix

\section*{Appendix: proof of $S(2,2)=8$}
\label{app}

We prove here that $S(2,2)=8$.  Our proof is long and requires several
   lemmas, some of  which may give ideas for  more general results. It
   will be convenient to work with \emph{reduced} collections of sets,
   in a sense that we define now.

\begin{defeng}
We say  that a collection $\mathcal{H}=\{H_1,H_2,\dots,  H_m\}$ of $m$
   distinct sets is reduced if  every element is \emph{useful} to make
   the sets distinct, that is  for every $x \in H_1\cup\dots\cup H_m$,
   we can  find $i$ and $j$ such  that $i \neq j$  and $H_i \backslash
   \{x\} = H_j \backslash \{x\}$.

\end{defeng}

Note  that  in a  reduced  collection of  sets,  there  cannot be  any
     \emph{universal} element, i.e., there  is no element in $H_1 \cap
     H_2  \cap \dots \cap  H_m$. Also  there are  no \emph{duplicated}
     elements, that  is for every  $x$ and $y$  in $H_1 \cup  H_2 \cup
     \dots \cup  H_m$, we  can find  $i$ and $j$  such that  $H_i \cap
     \{x,y\} \neq H_j \cap \{x,y\}$.

From a collection $\mathcal{H}$ of distinct sets, we can get a reduced
     collection  $\mathcal{H}'$ of  the same  cardinality  by deleting
     useless  elements  as  long  as  there  are  any  (the  resulting
     $\mathcal{H}'$ may depend on the choice of the arbitrary order of
     the deletion of the useless  elements).  We say that $\cal H'$ is
     \emph{obtained} from  $\cal H$.  If a  singleton, co-singleton or
     monotone matrix is found in  $\mathcal{H}'$, then it can be found
     in $\mathcal{H}$.   This is why, when computing  $S(k,l)$, we can
     suppose that the collections of sets we consider are reduced.

The following two  lemmas give answers to natural  questions: How many
     elements  are there  in a  reduced collection  of sets?   Given a
     reduced collection  of sets ${\cal H}  = \{ H_1,  H_2, \dots, H_m
     \}$, if  elements $e_1, e_2,  \dots, e_k$ are picked  in $H_1\cup
     H_2\cup \dots  \cup H_m$, how  many $H_i$'s have  distinct traces
     over  $\{e_1, \dots,  e_k\}$  ?  The  lemma  below is  implicitly
     stated in an article of Kogan~\cite{kogan:testset}.  That article
     gives an interesting characterization of the structure of special
     reduced collections of sets.

\begin{lemma}[\cite{kogan:testset}]  \label{lemmekogan}
Let $\mathcal{H} =  \{H_1, H_2, \dots, H_m\}$ be  a reduced collection
     of $m$  sets.  Then $H_1 \cup  \dots \cup H_m$ has  at most $m-1$
     elements.
\end{lemma}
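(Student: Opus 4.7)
The plan is to encode the reducedness condition into an auxiliary graph on the sets themselves, and then to argue that this graph is a forest, so it has at most $m-1$ edges, one per element.

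First, I would unpack the reducedness condition for a fixed element $x \in U := H_1 \cup \dots \cup H_m$. By definition, there exist $i \neq j$ with $H_i \setminus \{x\} = H_j \setminus \{x\}$. Since the $H_k$ are distinct, the two sets $H_i$ and $H_j$ cannot be equal, so they cannot both contain $x$ nor both miss $x$; hence $x$ lies in exactly one of them and $H_i \triangle H_j = \{x\}$. In other words, every element of $U$ is witnessed by a pair of sets whose symmetric difference is exactly that element.

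Next, I would build the graph $G$ whose vertices are $H_1, \dots, H_m$ and whose edges are obtained by choosing, for each $x \in U$, one witnessing pair $(H_{i_x}, H_{j_x})$ and inserting an edge labeled $x$. By construction $G$ has exactly $|U|$ edges, all with distinct labels. To finish it suffices to show that $G$ is a forest, since then $|U| \leq m - 1$.

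The heart of the argument, and the step I expect to be the main obstacle to state cleanly, is acyclicity. Parallel edges are ruled out immediately: two edges between the same pair of vertices $H_i, H_j$ would require $H_i \triangle H_j$ to contain two distinct elements, contradicting $|H_i \triangle H_j| = 1$. For a cycle $H_{a_1}, H_{a_2}, \dots, H_{a_k}, H_{a_1}$ with edge labels $x_1, \dots, x_k$ (necessarily distinct elements of $U$), I would use the telescoping identity
\[
(H_{a_1} \triangle H_{a_2}) \triangle (H_{a_2} \triangle H_{a_3}) \triangle \dots \triangle (H_{a_k} \triangle H_{a_1}) = \emptyset .
\]
The left-hand side equals $\{x_1\} \triangle \dots \triangle \{x_k\} = \{x_1, \dots, x_k\}$ since the $x_p$ are pairwise distinct, and this is nonempty because $k \geq 2$. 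This contradiction shows $G$ is acyclic, hence a simple forest on $m$ vertices, yielding $|U| \leq m - 1$.
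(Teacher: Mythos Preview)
Your argument is correct. The paper itself writes only ``Easy induction on $m$'' and gives no details, so your route is genuinely different: instead of peeling off sets one at a time, you encode reducedness globally as a graph on $\{H_1,\dots,H_m\}$ with one edge per element of $U$, and then use the telescoping identity for symmetric differences to show this graph is acyclic. This structural argument explains \emph{why} the bound is exactly $m-1$ (the witnessing pairs necessarily form a forest on $m$ vertices) and sidesteps the bookkeeping an inductive proof must do to track what happens to the reducedness hypothesis when a set is removed. The inductive approach is more routine once set up and matches the style of the paper's other ``easy induction'' proofs, but yours yields more insight into the underlying structure.
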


\begin{proof}
Easy induction on $m$.
\end{proof}

\begin{lemma} \label{lemmereduit}
Let $\mathcal{H} =  \{H_1, H_2, \dots, H_m\}$ be  a reduced collection
     of sets.   If $e_1, e_2, \dots,  e_k$ are distinct  elements of $
     H_1 \cup H_2 \cup \dots \cup  H_m$ then we can find $k+1$ $H_i$'s
     that  are  distinct  over  $e_1,  e_2, \dots,  e_k$,  i.e.,  sets
     $H_{i_1},  H_{i_2},  \dots,   H_{i_{k+1}}$  such  that  the  sets
     $H_{i_1}  \cap \{e_1,  e_2, \dots,  e_k\}$, $H_{i_2}  \cap \{e_1,
     e_2, \dots,  e_k\}$, \dots, $H_{i_{k+1}} \cap  \{e_1, e_2, \dots,
     e_k\}$ are distinct.
\end{lemma}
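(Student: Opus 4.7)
The plan is to prove this by induction on $k$, using the usefulness property to grow a set of witnesses one element at a time.

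For the base case $k=0$, the trace on an empty family of elements is always $\emptyset$, so a single set (which exists since $\mathcal{H}$ is non-empty) gives the one required witness. For the inductive step, I would apply the induction hypothesis to the $k-1$ elements $e_1,\dots,e_{k-1}$ to obtain $k$ sets $H_{i_1},\dots,H_{i_k}$ with pairwise distinct traces on $\{e_1,\dots,e_{k-1}\}$; a fortiori they still have pairwise distinct traces on $\{e_1,\dots,e_k\}$. Thus the whole question reduces to producing a $(k+1)$-th set whose trace on $\{e_1,\dots,e_k\}$ differs from each of the $k$ traces already found.

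To obtain this last set, I would invoke the fact that $e_k$ is useful: there exist sets $H_a,H_b\in\mathcal{H}$ with $H_a\setminus\{e_k\}=H_b\setminus\{e_k\}$, so they have the same trace $T$ on $\{e_1,\dots,e_{k-1}\}$ and differ only on $e_k$. If no $H_{i_j}$ has trace $T$ on $\{e_1,\dots,e_{k-1}\}$, then $H_a$ itself is a new witness (it has a brand-new trace on $\{e_1,\dots,e_{k-1}\}$, hence on $\{e_1,\dots,e_k\}$). Otherwise exactly one $H_{i_j}$, say $H_{i_1}$, has trace $T$ on $\{e_1,\dots,e_{k-1}\}$, and I would then pick $H'\in\{H_a,H_b\}$ with the opposite $e_k$-membership to $H_{i_1}$: this $H'$ agrees with $H_{i_1}$ on $\{e_1,\dots,e_{k-1}\}$ but disagrees on $e_k$, and its trace on $\{e_1,\dots,e_{k-1}\}$ differs from that of each $H_{i_j}$ for $j\geq 2$, so its trace on $\{e_1,\dots,e_k\}$ is distinct from all $k$ existing ones.

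The only subtlety I anticipate is the bookkeeping in the case where the induction hypothesis' output already contains one of $H_a$ or $H_b$, which is handled by the freedom to choose $H'$ as whichever of $H_a,H_b$ has the $e_k$-membership opposite to $H_{i_1}$. This choice is legitimate because $H_a$ and $H_b$ have, by construction, the two possible values of $e_k$-membership, and $H'$ is then automatically distinct from $H_{i_1}$ since their traces on $\{e_1,\dots,e_k\}$ differ. This is the main (and essentially only) obstacle; the rest of the argument is routine bookkeeping about traces.
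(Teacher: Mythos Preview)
Your proof is correct and follows exactly the approach the paper indicates: the paper's proof reads in its entirety ``Easy induction on $k$,'' and you have supplied precisely the natural details of that induction, using the usefulness of $e_k$ to manufacture the extra witness. There is nothing to add.
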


\begin{proof}
Easy induction on $k$.
\end{proof}

From  now on,  for simplicity  we will  make no  difference  between a
    collection  of sets  and its  incidence  matrix, in  which we  can
    rearrange rows and columns. When a matrix is given, we call $r_1$,
    $r_2$, \dots  its rows and  $c_1$, $c_2$, \dots its  columns.  The
    incidence matrix of  a reduced collection of sets  is a 0-1 matrix
    where all  rows are  distinct, all columns  are distinct,  and for
    each column,  there exist  two rows that  become identical  if one
    erases the column. This implies that each column contains at least
    one 1 and  one 0. We use the notation: $r_i  \mapsto r_j [c_k]$ to
    express the  facts that  $r_i$ and $r_j$  are identical  except in
    column $c_k$,  and row $r_j$  (resp. $r_i$) has  a 1 (resp.  0) at
    column $c_k$.

We need seven more lemmas to show that $S(2,2)=8$.

\begin{lemma} \label{lemme94}
Let  $\cal  H$  be  a  collection  of nine  distinct  sets  over  four
     elements. We can find $S^3$, $\bar{S}^3$ or $M^3$ in $\cal H$.
\end{lemma}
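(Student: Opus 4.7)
The plan is to combine a reduction of $\mathcal{H}$ with Lemma~\ref{lemme1} and then a case analysis. First, I would pass to a reduced subcollection $\mathcal{H}'$ obtained from $\mathcal{H}$, still with $|\mathcal{H}'| = 9$. Any singleton, cosingleton or monotone matrix found in $\mathcal{H}'$ lifts to one in $\mathcal{H}$, so it is enough to work in $\mathcal{H}'$. Since the ground set of $\mathcal{H}'$ has at most $4$ elements (inherited from $\mathcal{H}$) and at least $4$ elements (because $2^{3} = 8 < 9$ forbids a ground set of size $\le 3$), the collection $\mathcal{H}'$ lives on exactly $4$ elements, say $\{e_1, e_2, e_3, e_4\}$.

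Next, I would invoke Lemma~\ref{lemme1} with $k=l=2$: since $|\mathcal{H}'| = 9 > \binom{4}{2} = 6$, either a $3$-increasing matrix or a $3$-decreasing matrix occurs inside $\mathcal{H}'$. Replacing $\mathcal{H}'$ by its complementary collection (swapping $0$s and $1$s in the incidence matrix over $\{e_1, e_2, e_3, e_4\}$) if necessary, I may assume a $3$-increasing matrix $N$ is present, with rows $H_1, H_2, H_3, H_4$ and columns indexed by $e_1, e_2, e_3$; this is harmless because complementation exchanges $S^3 \leftrightarrow \bar{S}^3$ and preserves the existence of $M^3$ up to a permutation of rows and columns.

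The three free entries of $N$, at positions $(3,1)$, $(4,1)$, $(4,2)$, take $2^{3}=8$ possible value-patterns. Two of them are immediate: $(0,0,0)$ yields $S^{3}$ and $(1,1,1)$ yields $M^{3}$, and in these two cases we are done. For each of the six remaining \emph{bad} patterns, I would exploit the fourth element $e_4$ together with the five sets of $\mathcal{H}'$ not appearing as a row of $N$. The reduction hypothesis applied to $e_4$ already guarantees a pair of sets in $\mathcal{H}'$ that differ exactly in $e_4$. I expect, in each bad case, to select a triple of columns from $\{e_1,e_2,e_3,e_4\}$ (substituting $e_4$ for a well-chosen $e_i$) and four rows of $\mathcal{H}'$ that display one of $S^{3}$, $\bar{S}^{3}$, or $M^{3}$; Lemma~\ref{lemmereduit} (which ensures, for any triple of elements, the existence of four sets of $\mathcal{H}'$ with pairwise distinct traces on that triple) will be used to pick the extra rows.

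The main obstacle is this final case analysis over the six bad patterns. In each case the argument may branch further on the value of the fourth-column entries of the rows $H_1,\dots,H_4$ and on the traces of the remaining five sets over $\{e_1,e_2,e_3\}$; choosing which element of $\{e_1,e_2,e_3\}$ to swap for $e_4$ and which extra set to adjoin in order to convert the bad pattern into one of the three target matrices will require careful bookkeeping, which is presumably why the paper isolates Lemma~\ref{lemme94} as a separate step rather than folding it into the main argument.
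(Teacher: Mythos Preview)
Your setup is sound: the collection is automatically reduced (nine distinct sets cannot live on three elements), the complementation symmetry legitimately lets you assume a $3$-increasing matrix, and the free-entry patterns $(0,0,0)$ and $(1,1,1)$ give $S^{3}$ and $M^{3}$ at once. The gap is that the remaining six patterns are left as a plan rather than a proof. Lemma~\ref{lemmereduit} only promises four sets with pairwise distinct traces on any chosen triple of elements, and four arbitrary distinct $3$-bit strings need not realise any of $S^{3}$, $\bar{S}^{3}$, $M^{3}$ --- for instance $000,100,010,101$ realise none of them --- so swapping in $e_{4}$ and invoking that lemma does not close a case by itself. In each bad pattern you would still have to branch on the $e_{4}$-coordinates of $H_{1},\dots,H_{4}$ and on which of the remaining subsets of $\{e_{1},\dots,e_{4}\}$ occur among the other five rows; this can presumably be pushed through, but none of it is done, and it is a larger case analysis than the one the paper actually needs.

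The paper takes a different and shorter route, bypassing Lemma~\ref{lemme1} entirely. Since seven distinct subsets of a $3$-set always contain $S^{3}$ or $\bar{S}^{3}$, at most six of the nine rows have distinct traces on $\{c_{1},c_{2},c_{3}\}$; and since no three rows can share such a trace (only $c_{4}$ would be left to separate them), pigeonhole yields three pairs of rows, each pair differing only on $c_{4}$. Lemma~\ref{lemmekogan} then allows one to discard $c_{3}$ so that the three trace representatives $r_{1},r_{2},r_{3}$ are already distinct on $\{c_{1},c_{2}\}$. Up to complementation there are just two possible $3$-element subsets of $\{0,1\}^{2}$, and in each case the six rows over $\{c_{1},c_{2},c_{4}\}$ visibly contain $M^{3}$ or $S^{3}$. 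Two cases, no sub-branching.
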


\begin{proof}
Note that  $\cal H$ is necessarily  reduced since we  cannot have nine
   distinct sets  over three  elements.  First remark  that if  we have
   seven distinct sets over three  elements, then obviously, we find in
   them $S^3$ or $\bar{S}^3$.  Let  $\cal H$ be a reduced collection of
   nine distinct sets over four elements $c_1,c_2,c_3,c_4$.  Because of
   the  preceeding remark,  at most  six of  the rows  of $\cal  H$ are
   distinct  over $c_1, c_2,  c_3$.  At  least five  of those  rows are
   distinct over $c_1, c_2, c_3$  (if not, we cannot have nine distinct
   rows just with the column $c_4$).  Furthermore it is impossible that
   three rows of $\cal H$ are equal over $c_1, c_2, c_3$.

Hence, we  can find in $\cal H$  six rows $r_1, \dots,  r_6$ such that
     $r_1$,  $r_2$,  $r_3$  are   distinct  over  $c_1,  c_2,  c_3$,  and
     $r_1 \mapsto r_4 [c_4]$,     $r_2 \mapsto r_5 [c_4]$,       $r_3
\mapsto r_6 [c_4]$.    By
     Lemma~\ref{lemmekogan},  we can suppose  without loss  of generality
     that $r_1$,  $r_2$ and  $r_3$ are distinct  over $c_1,  c_2$.
Forget $c_3$.  Since
     there  are only four  possible sets  over $c_1,c_2$,  we have  only two
     cases   to  consider  (the    other two  are   equivalent  by
     complementation):

\begin{itemize}

\item We find in $\cal H$ the matrix:
    \[  \left(
\begin{array}{cccc}
000\\
010\\
110\\
001\\
011\\
111
\end{array}
\right) \]

Here,  we find $M^3$ in $\cal H$.

\item We find in $\cal H$ the matrix:
\[\left(
\begin{array}{cccc}
000\\
100\\
010\\
001\\
101\\
011
\end{array}
\right)
\]

Here, we find $S^3$ in $\cal H$.

\end{itemize}

\hbox{} \end{proof}

The following  lemma will be used extensively in the sequel.

\begin{lemma} \label{lemmeutile}
Let $\mathcal{H}$ be a reduced collection  of sets in which we can find
     the   matrix  $000   \choose  111$.   Then,  we   can   find  $S^3$,
     $\bar{S}^3$ or $M^3$ in $\mathcal{H}$.
\end{lemma}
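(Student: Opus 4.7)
The plan is to work entirely inside the three columns $c_1, c_2, c_3$ that witness the given ${000 \choose 111}$, and to analyze the set $R \subseteq \{0,1\}^3$ of distinct traces that sets of $\mathcal{H}$ induce on these columns; write $e_i \in \{0,1\}^3$ for the singleton pattern having a $1$ only in position $i$. By hypothesis $000, 111 \in R$, and the reduction condition on $\mathcal{H}$ will let me force enough further patterns into $R$ to locate $S^3$, $\bar{S}^3$ or $M^3$ without ever leaving these three columns.

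The first dichotomy is whether $R$ contains a saturated chain $000 \subsetneq A \subsetneq B \subsetneq 111$ with $|A|=1$ and $|B|=2$. If it does, then after reordering the three columns so that $A$ becomes $100$ and $B$ becomes $110$, the four corresponding sets of $\mathcal{H}$ produce $M^3$ as a submatrix. Otherwise, setting $S = R \cap \{100, 010, 001\}$ and $D = R \cap \{011, 101, 110\}$, a direct inspection of the Boolean lattice (using the fact that $e_i \subsetneq d_j$ iff $i \neq j$, where $d_j = 111 - e_j$) shows that chain-freeness forces exactly one of the three regimes: \textbf{(i)} $S = \emptyset$; \textbf{(ii)} $S = \{e_j\}$ for some $j$ and $D \subseteq \{111 - e_j\}$; \textbf{(iii)} $|S| \geq 2$ and $D = \emptyset$.

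Each regime is closed using reducedness: for every $c_i$ there exists a pair $(H_i^-, H_i^+)$ in $\mathcal{H}$ differing only at $c_i$, whose trace on $(c_1, c_2, c_3)$ has the form $(p, p + e_i)$ with $p_i = 0$, so both $p$ and $p + e_i$ lie in $R$. In regime (i), among the four candidate traces for each such pair, only $(111 - e_i, 111)$ avoids inserting a new singleton into $R$; hence $111 - e_i \in R$ for every $i$, and together with $111$ this produces the rows $\{011, 101, 110, 111\} = \bar{S}^3$. In regime (iii), we may assume $\{100, 010\} \subseteq S$ after relabelling, and for the element $c_3$ the three candidate traces $(010, 011)$, $(100, 101)$, $(110, 111)$ each contain a doubleton forbidden by $D = \emptyset$; so the pair for $c_3$ must be $(000, 001)$, forcing $001 \in R$ and hence $\{000, 100, 010, 001\} = S^3 \subseteq R$. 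Regime (ii) is incompatible with reducedness: if $S = \{e_j\}$, then for any $i \neq j$ each of the four candidate pair traces for $c_i$ inserts either a singleton distinct from $e_j$ or a doubleton distinct from $111 - e_j$, both forbidden, so no legal pair for $c_i$ exists.

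The main obstacle I foresee is bookkeeping rather than conceptual: in each regime one must methodically enumerate the four possible pair traces at each $c_i$ and verify that at most one is compatible with the constraints on $R$. Once the three regimes are correctly identified, the proof is short and stays entirely within the columns $\{c_1, c_2, c_3\}$, so it needs no Ramsey-type tool beyond the reduction hypothesis applied at each of these three elements.
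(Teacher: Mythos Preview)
Your proof is correct. Both your argument and the paper's stay entirely within the three columns $c_1,c_2,c_3$ and reduce to a short case check, but the organization differs. The paper first invokes Lemma~\ref{lemmereduit} to guarantee four distinct traces among which $000$ and $111$ appear, then (up to symmetry) is left with the two configurations $\{000,100,010,111\}$ and $\{000,100,011,111\}$; in each it applies reducedness at a single well-chosen column to force one extra row and finish. You instead classify the entire trace set $R$ by whether it contains a saturated chain (yielding $M^3$ immediately) and, if not, by the pair $(|S|,|D|)$; reducedness is then applied uniformly at each column to eliminate regime~(ii) and to complete $\bar S^3$ or $S^3$ in regimes~(i) and~(iii). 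Your route avoids citing Lemma~\ref{lemmereduit} and makes the symmetry between $S^3$ and $\bar S^3$ transparent (regimes~(i) and~(iii) are dual), at the cost of a slightly longer enumeration of the four candidate pair-traces at each column; the paper's version is terser but leans on the auxiliary lemma and a less symmetric case split.
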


\begin{proof}
By Lemma~\ref{lemmereduit} we can find in $\cal H$ a matrix $M$ with four
     distinct rows,  and among them $000$  and $111$. Suppose that
     $M$ is not $S^3$, $\bar{S}^3$ or $M^3$. Then there are   only two
     cases to consider (the other  cases are equivalent by permuting rows
     or columns or swapping $0$ and $1$):

\begin{itemize}
\item We find in $\mathcal{H}$ the matrix $M = \left( \begin{array}{c}
000\\ 100 \\ 010 \\ 111 \end{array} \right)$.

There is no possibility to add a fifth row, different from the four
above, to this 3-column matrix without finding $S^3$ or
$M^3$. If we  delete the element corresponding  to the last  column, then two
     sets  of  $\mathcal{H}$  must  become  equal. There  are  then  four
     subcases: we  can add to $M$ the  row 001 (and then  we find $S^3$), or
     101   ($\rightarrow   M^3$),   or 011   ($\rightarrow  M^3$)   or   110
     ($\rightarrow M^3$).

\item We find in $\mathcal{H}$ the matrix $M = \left( \begin{array}{c}
000\\ 100 \\ 011 \\ 111 \end{array} \right)$.

Again, there is no possibility to add a fifth row to $M$. If we
delete the element  corresponding to the second column, then
two
     sets  of  $\mathcal{H}$ must  become  equal.   There  are then  four
     subcases: we  can add  to $M$ the  row 010 ($\rightarrow  M^3$), or 110
     ($\rightarrow  M^3$), or 001 ($\rightarrow  M^3$) or  101 ($\rightarrow
     M^3$).

\end{itemize}
\hbox{}

\end{proof}

\begin{lemma} \label{lemmej3}

Let  $\cal H$  be a  reduced  collection of  sets over  at least  five
    elements. If we find in $\cal H$ the matrix $\left( \begin{array}{c}
    0000 \\  1000 \\ 0100 \end{array}  \right)$ then we  can find $S^3$,
    $\bar{S}^3$ or $M^3$ in $\cal H$.

\end{lemma}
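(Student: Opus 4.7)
The plan is to exhibit one of $S^3$, $\bar{S}^3$, or $M^3$ in $\mathcal{H}$ by repeated appeal to Lemma~\ref{lemmeutile}, whose hypothesis is the presence of the sub-matrix $000 \choose 111$ in $\mathcal{H}$. Let $c_1, c_2, c_3, c_4$ denote the four columns of the given $3\times 4$ matrix, so that the three rows $H_a, H_b, H_c$ have patterns $0000, 1000, 0100$ at these columns, and let $c_5$ be a fifth element of $\bigcup_{H \in \mathcal{H}} H$, guaranteed by hypothesis. The row $H_a$, being zero on $c_1, c_2, c_3, c_4$, is the natural candidate for the all-zero row of the configuration $000 \choose 111$.

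I first perform two preliminary reductions. If some row of $\mathcal{H}$ has three or more $1$s among $c_1, c_2, c_3, c_4$, then together with $H_a$ it provides $000 \choose 111$ on three of these columns and Lemma~\ref{lemmeutile} concludes. If some row $r$ has a $1$ at $c_3$ and $0$ at both $c_1, c_2$, then $H_a, H_b, H_c, r$ restricted to $c_1, c_2, c_3$ exhibit $S^3$; an analogous statement holds for $c_4$. So I may assume that every row has at most two $1$s among $c_1, c_2, c_3, c_4$, and that every row with a $1$ at $c_3$ or $c_4$ has a $1$ at $c_1$ or $c_2$. Now I exploit that $\mathcal{H}$ is reduced, so each of $c_3, c_4, c_5$ is useful. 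The usefulness of $c_3$ produces a pair $\rho \mapsto \rho'[c_3]$: since $\rho'$ has a $1$ at $c_3$, it has a $1$ at $c_1$ or $c_2$; if at both, then $H_a, H_b, \rho, \rho'$ restricted to $c_1, c_2, c_3$ form $M^3$, so otherwise, by the symmetry $c_1 \leftrightarrow c_2$, $H_b \leftrightarrow H_c$, I may assume $\rho'$ has $1$ at $c_1$ and $0$ at $c_2$, and the at-most-two-ones constraint then forces $\rho' = 1010$ and $\rho = 1000$ at $c_1 c_2 c_3 c_4$. An analogous analysis of the pair given by $c_4$ either produces $M^3$ directly, or yields $(\sigma, \sigma')$ with $\sigma = 1000$, $\sigma' = 1001$, or with $\sigma = 0100$, $\sigma' = 0101$ at $c_1 c_2 c_3 c_4$.

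Finally, the usefulness of $c_5$ yields a pair $\tau \mapsto \tau'[c_5]$, whose common pattern $p$ at $c_1 c_2 c_3 c_4$ lies in $\{0000, 1000, 0100, 1100, 1010, 0110, 1001, 0101\}$ by the reductions above. For each value of $p$, I combine $\tau, \tau'$ with the rows $H_a, H_b, H_c, \rho, \rho', \sigma, \sigma'$ and with the values of $H_a, H_b, H_c$ at $c_5$ to locate $S^3$, $\bar{S}^3$, or $M^3$, occasionally by invoking Lemma~\ref{lemmeutile} on an unexpected triple of columns. The main obstacle is precisely this final case distinction, which is finite but tedious: some sub-cases are immediate (for instance, $p = 1100$ with $H_a$ and $H_b$ having $0$ at $c_5$ gives $M^3$ at $c_1, c_2, c_5$; $p = 0000$ with $H_a, H_b, H_c$ all having $0$ at $c_5$ gives $S^3$ at $c_1, c_2, c_5$), while others require carefully tracing the forced identifications between $\tau, \tau'$ and the previously identified rows before a suitable triple of columns reveals the desired configuration.
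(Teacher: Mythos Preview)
Your setup is essentially the paper's: the preliminary reductions and the extraction of $\rho' = 1010$ from the usefulness of $c_3$ are exactly what the paper does (it calls this row $r_4$). However, your proof is not complete. You announce an eight-way case split on the pattern $p$ of the $c_5$-pair $(\tau,\tau')$, exhibit two easy sub-cases, and declare the rest ``finite but tedious.'' That tedious part is the whole content of the lemma --- in particular, the sub-case in which $\bar S^3$ (rather than $S^3$ or $M^3$) must appear is genuinely delicate, and you give no indication of how it is reached.

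The paper organizes the endgame far more efficiently. Instead of casing on $p$, it cases on the value of $H_a$ at $c_5$. If $H_a$ has $0$ at $c_5$, then Lemma~\ref{lemmeutile} forces $\rho'$ and $\sigma'$ to have $0$ at $c_5$ (each already has two $1$'s among $c_1,\dots,c_4$), and the relations $\rho\mapsto\rho'[c_3]$, $\sigma\mapsto\sigma'[c_4]$ propagate this $0$ to $\rho,\sigma$; a single new row with a $1$ at $c_5$ then yields $S^3$. If $H_a$ has $1$ at $c_5$, one cases once more on $\rho$ at $c_5$: when $\rho$ has $0$, $S^3$ appears at $c_3c_4c_5$; when $\rho$ (and by symmetry $\sigma$) has $1$, all five known rows have $1$ at $c_5$, and only then is the $c_5$-pair introduced --- it is forced to have pattern $1100$, and $\bar S^3$ emerges. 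Three short cases instead of eight-times-many.

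Two smaller points. First, you carry the alternative $(\sigma,\sigma')=(1000,1001)$ through the rest of the argument, but it dies at once: $H_a,H_c,\rho',\sigma'$ restricted to $c_2,c_3,c_4$ are $000,100,010,001$, which is $S^3$. The paper tacitly uses this to get $\sigma'=0101$. Second, in your final step you should argue with $\rho,\sigma$ rather than $H_b,H_c$: they agree on $c_1,\dots,c_4$, but only $\rho,\sigma$ are tied to $\rho',\sigma'$ at $c_5$ via the $\mapsto$ relations, and that tie is precisely what makes the propagation above work.
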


\begin{proof}
Assume that  we can find  in $\cal H$  this matrix and that  we cannot
    find $S^3$, $\bar{S}^3$  or $M^3$ in $\cal H$.  There exist $i$ and
    $j$ such that $r_i \mapsto r_j [c_3]$. $i\neq 1$, otherwise we find
    $S^3$.   By  Lemma~\ref{lemmeutile}, we  can  assume w.l.o.g.  that
    $i=2$, and  let $j=4$.  Similarly,  there exist $i'$ and  $j'$ s.t.
    $r_{i'}  \mapsto r_{j'}  [c_4]$, and  since we  assumed  that $S^3$
    cannot be  found in $\cal  H$ we can  put w.l.o.g.  $i'=3$  and let
    $j'=5$.  Thus, we have found in $\cal H$ the matrix $M$:

\[
M= \left(
\begin{array}{c}
0000\\
1000\\
0100\\
1010\\
0101
\end{array}
\right)
\]

There is by assumption a fifth element (or column) $c_5$. We consider
three cases (the number in a square will always represent
the current hypothesis):

\begin{description}

\item[First case:]
We find in $\cal H$:

\[
\begin{array}{cc}

\left(
\begin{array}{cc}
0000 & \fbox{0}\\
1000 & 0^2\\
0100 & 0^3\\
1010 & 0^1\\
0101 & 0^1\\
        & 1^4
\end{array}
\right)

&

\begin{array}{l}
^1 \text{by Lemma \ref{lemmeutile}} \\
^2 \text{because  $r_2 \mapsto r_4 [c_3]$} \\
^3 \text{ because $ r_3 \mapsto r_5 [c_4]$} \\
^4 \text{each column must have a 1 somewhere}
\end{array}
\end{array}
\]

By Lemma~\ref{lemmeutile}, we are allowed to put 1   only twice in the
last row. In all cases, we find  $S^3$.

\item[Second case:]
We find in $\cal H$:
\[
\begin{array}{cl}
     \left(
\begin{array}{cc}
0000 & \fbox{1}\\
1000 & \fbox{0}\\
0100 & 0^3  \\
1010 & 0^2  \\
0101 & 0^1
\end{array}
\right)
&
\begin{array}{l}
    ^1 \text{by Lemma \ref{lemmeutile} } \\
    ^2 \text{because  $r_2 \mapsto r_4 [c_3]$ }\\
    ^3 \text{ because $ r_3 \mapsto r_5 [c_4]$ }
\end{array}
\end{array}
\]

We find $S^3$.

\item[Third case:]
We find in $\cal H$:
\[
\begin{array}{cl}
     \left(
\begin{array}{cc}
0000 & \fbox{1}\\
1000 & \fbox{1}\\
0100 & 1^1 \\
1010 &  1^2\\
0101 &  1^3\\
        & 0^4
\end{array}
\right)
&
\begin{array}{l}
    ^1 \text{because of the second case by symmetry} \\
    ^2 \text{because $ r_2 \mapsto r_4 [c_3]$ }\\
    ^3 \text{because  $r_3 \mapsto r_5 [c_4] $} \\
    ^4 \text{let $r_6$ be such that $r_6 \mapsto r_i [c_5]$ for some $i$}
\end{array}
\end{array}
\]

By Lemma~\ref{lemmeutile}, $r_6$ cannot have three or four 1's on the
    first four  columns. If $r_6$  has at most  one 1 on  these columns,
    then  necessarily  we   are  done  by  Lemma~\ref{lemmeutile}.  Thus
    necessarily $r_6$ has exactly two 1's on the first four columns.

One of these 1  has to be in the first or  second column, otherwise we
    find $S^3$. By symmetry of the first two columns, we can assume that
    $r_6$ has a  1 in column $c_1$.  If the other 1 is  in column $c_4$,
    again  we find  $S^3$. If  it is  in column  $c_3$, we  are  done by
    Lemma~\ref{lemmeutile}. Thus we are  left with only one possibility,
    and we find:

\[
\begin{array}{cl}
\left(
\begin{array}{cl}
0000 & 1  \\
1000 & 1  \\
0100 & 1  \\
1010 & 1 \\
0101 & 1  \\
1100 & 0 \\
1100 & 1^1 \\
\end{array}
\right)
&
\begin{array}{l}
^1 \text{there exists $i$ s.t. $r_6 \mapsto r_i [c_5]$ }
\end{array}
\end{array}
\]

We find $\bar{S}^3$.

\end{description}

\hbox{}
\end{proof}

\begin{lemma} \label{lemmej4}

Let $\cal H$ be a reduced  collection of   sets over at least five
     elements. If we find in $\cal H$ the matrix $\left( \begin{array}{c}
     0000 \\  1000 \\ 0110 \end{array}  \right)$ then we  can find $S^3$,
     $\bar{S}^3$ or $M^3$ in $\cal H$.

\end{lemma}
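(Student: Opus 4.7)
The plan is to mimic the cascading case analysis of Lemma~\ref{lemmej3}: use the reduced hypothesis column by column to introduce twin partners, and dispatch most branches by Lemma~\ref{lemmeutile}. Let $r_1, r_2, r_3$ denote the rows projecting to $0000, 1000, 0110$ on columns $c_1,\dots,c_4$, and suppose for contradiction that $\mathcal{H}$ contains none of $S^3$, $\bar{S}^3$, $M^3$. Since $c_4$ is zero on $r_1, r_2, r_3$, the reduced hypothesis supplies a twin pair $r_i \mapsto r_j[c_4]$ with $r_j[c_4]=1$.

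I first case-analyze the projection of $r_j$ onto $(c_1, c_2, c_3)$. Any projection of Hamming weight $\geq 2$ forces $r_1$ and $r_j$ to realize $\binom{000}{111}$ on the three columns where $r_j$ has a $1$, so Lemma~\ref{lemmeutile} finishes. The weight-zero projection $000$ and the weight-one projection $010$ both make $r_1, r_2, r_3, r_j$ realize $S^3$ on columns $(c_1, c_3, c_4)$, while the weight-one projection $001$ (giving $r_j = 0011$) makes the same four rows realize $S^3$ on columns $(c_1, c_2, c_4)$. The only surviving possibility is that $r_j$ projects to $100$ on the first three columns, so some $r_4 \in \mathcal{H}$ has first-four-column projection $1001$.

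With the four rows $r_1, r_2, r_3, r_4 = 0000, 1000, 0110, 1001$ now fixed on columns $c_1,\dots,c_4$, I invoke the reduced hypothesis at the fifth column $c_5$ guaranteed by the assumption, and at $c_2$ and $c_3$, to force further twin partners. By the Lemma~\ref{lemmeutile} bound, the first-four-column projection of each new partner is restricted to a short list (those avoiding $\binom{000}{111}$ with $r_1$). A key observation is that the $c_2$- and $c_3$-twins cannot both involve $r_1$: the two partners $0100$ and $0010$, together with $r_1, r_2$, would yield $S^3$ on $(c_1, c_2, c_3)$. Systematic pruning of this kind, carried out exactly as in the three final cases of Lemma~\ref{lemmej3}, exhausts the admissible projections of the $c_5$-twin partner and in each sub-case delivers one of $S^3$, $\bar{S}^3$, $M^3$ in $\mathcal{H}$, or else another $\binom{000}{111}$ configuration that is closed by Lemma~\ref{lemmeutile}.

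The main obstacle is the bookkeeping in the last step: each twin pair at $c_5$ branches according to the partner's projection on cols $c_1,\dots,c_4$, and in every sub-case one must select exactly the right triple of columns from $\{c_1,\dots,c_5\}$ to expose the forbidden matrix. The list of admissible projections has size at most nine, keeping the analysis finite, and repeated use of Lemma~\ref{lemmeutile} closes most branches immediately, precisely as in the three-case argument at the end of Lemma~\ref{lemmej3}.
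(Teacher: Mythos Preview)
Your argument diverges from the paper's in a way that leaves real work undone. The paper's proof is short because it uses two earlier results you bypass: Lemma~\ref{lemmereduit} immediately supplies five rows $r_1,\dots,r_5$ that are pairwise distinct on $c_1,\dots,c_4$, and then Lemma~\ref{lemmej3} (applied as a black box, not re-proved) rules out any row whose first-four-column projection has a single~$1$ off $c_1$ --- because such a row, together with $r_1=0000$ and $r_2=1000$, reproduces the Lemma~\ref{lemmej3} configuration after permuting columns. Together with Lemma~\ref{lemmeutile} and a direct $S^3$ check, this forces $r_4,r_5\in\{1001,1010,1100\}$; the reduced hypothesis at $c_2$ and $c_3$ then pins down $1100$ and $1010$, and a row with a $1$ at $c_4$ finishes.

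You instead run the reduced hypothesis column by column without ever invoking Lemma~\ref{lemmej3}. The first phase (isolating $r_4=1001$ via the $c_4$-twin) is fine, but the closing phase is only asserted: you say the $c_5$-, $c_2$-, $c_3$-twin partners can be ``systematically pruned exactly as in the three final cases of Lemma~\ref{lemmej3}'', yet those three cases were tailored to a different base matrix and to twin partners known to agree with a fixed row off one column. Here the twin pair at $c_5$ need not involve any of $r_1,\dots,r_4$, so its first-four-column projection is an arbitrary string of weight $\le 2$, and single-$1$ projections like $0100$ or $0010$ are \emph{not} excluded by Lemma~\ref{lemmeutile} alone. Your one stated observation (that the $c_2$- and $c_3$-twins cannot both produce $0100$ and $0010$ from $r_1$) handles only one sub-branch. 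The remaining case tree is larger than you suggest, and the clean way to collapse it is precisely to cite Lemma~\ref{lemmej3} --- which is what the paper does.
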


\begin{proof}
By Lemma~\ref{lemmereduit},  we can  suppose that $r_1$,  \dots, $r_5$
    are distinct over  $c_1$, \dots, $c_4$. Assume that  we cannot find
    $S^3$, $\bar{S}^3$ or $M^3$  in $\cal H$.  By Lemma~\ref{lemmeutile}
    and Lemma~\ref{lemmej3},  $r_4$ and $r_5$ have both  exactly two 1's
    over $c_1$,  \dots, $c_4$.   For the rows  $r_4$ and $r_5$,  0011 or
    0101 bring $S^3$ and 0110  is impossible because of $r_3$. Thus, for
    $r_4$ and $r_5$ the only possibilities are 1001, 1010 and 1100.

Since erasing $c_2$ make two rows  equal, we can suppose that $r_4$ is
    $1100$ over the first four columns. And since erasing $c_3$ make two
    rows equal, we can suppose that  $r_5$ is $1010$ over the first four
    columns.  As each column must have at least one 1, we find:

\[
\left(
\begin{array}{r}
0000 \\
1000 \\
0110 \\
1100 \\
1010 \\
      1
\end{array}
\right)
\]

The last line has exactly one 1 in the first three columns by
Lemma~\ref{lemmeutile} and Lemma~\ref{lemmej3}. In each case, we
find $S^3$.

\end{proof}

\begin{lemma} \label{lemmej5}

Let $\cal  H$ be a reduced  collection of   sets  over at least
     five  elements.  If   we  find  in  $\cal  H$   the  matrix  $\left(
     \begin{array}{c} 0000 \\ 1000  \end{array} \right)$ then we can find
     $S^3$, $\bar{S}^3$ or $M^3$ in $\cal H$.

\end{lemma}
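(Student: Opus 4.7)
The plan is to assume, for contradiction, that no $S^3$, $\bar{S}^3$, or $M^3$ can be found in $\mathcal{H}$, and to show that the four columns $c_1,c_2,c_3,c_4$ carrying the rows $r_1=0000$ and $r_2=1000$ restrict the admissible traces of every other row on these columns so severely that $S^3$ is ultimately forced.

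First, apply Lemma \ref{lemmereduit} with $k=4$ and $e_1,\dots,e_4 = c_1,\dots,c_4$: at least five distinct traces over these four columns must occur among the rows of $\mathcal{H}$. Two of them are $0000$ (from $r_1$) and $1000$ (from $r_2$); hence at least three further distinct traces appear.

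Next, eliminate almost all candidate traces using the preceding lemmas. Any trace in $\{0100, 0010, 0001\}$, by a column permutation fixing $c_1$ and sending the column carrying the single $1$ to position $2$, produces the configuration forbidden by Lemma \ref{lemmej3}. Any trace in $\{0110, 0101, 0011\}$, by an analogous column permutation fixing $c_1$, produces the configuration forbidden by Lemma \ref{lemmej4}. Any trace having at least three $1$s contains, on the three columns where it has its $1$s, the sub-pattern consisting of row $000$ (from $r_1$) and row $111$, so Lemma \ref{lemmeutile} applies. The only surviving traces are $1001$, $1010$, and $1100$.

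Since at least three further traces must appear and only three candidates remain, each of $1001$, $1010$, $1100$ is realized by some row of $\mathcal{H}$. Picking representatives $H_a, H_b, H_c$ with these traces respectively and restricting $r_1, H_c, H_b, H_a$ to the columns $c_2, c_3, c_4$ yields rows $000, 100, 010, 001$, which after reordering is precisely the matrix $S^3$, contradicting our assumption. The main obstacle is the case analysis in the enumeration step: one must verify for each excluded trace that the relevant preceding lemma can indeed be invoked, using that ``finding'' a matrix in $\mathcal{H}$ is invariant under permuting columns (and that restricting to a subset of columns preserves the incidence structure inherited from $\mathcal{H}$).
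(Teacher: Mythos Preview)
Your proof is correct and follows essentially the same approach as the paper: use Lemma~\ref{lemmereduit} to obtain five distinct traces on $c_1,\dots,c_4$, then invoke Lemmas~\ref{lemmeutile}, \ref{lemmej3}, and \ref{lemmej4} to force the three remaining traces to be $1100$, $1010$, $1001$, yielding $S^3$ on columns $c_2,c_3,c_4$. Your case analysis is slightly more explicit than the paper's, but the argument is the same.
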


\begin{proof}
By Lemma~\ref{lemmereduit}, we can  suppose that $r_1, \dots, r_5$ are
distinct    over   $c_1,    \dots,   c_4$.  Assume that we cannot find $S^3$,
     $\bar{S}^3$ or $M^3$ in $\cal H$.   By   Lemma~\ref{lemmeutile},
~\ref{lemmej3}
and~\ref{lemmej4},  we know  that  $r_3$, $r_4$  and  $r_5$ must  have
exactly two 1 over $c_1$,...,$c_4$, and one of them  on $c_1$.
Finally, we find in $\cal
H$ the following matrix and then $S^3$.

\[
\left(
\begin{array}{c}
0000\\
1000\\
1100\\
1010\\
1001
\end{array}
\right)
\]
\end{proof}

\begin{lemma} \label{lemmej6}

Let $\cal  H$ be a reduced  collection of distinct sets  over at least
     five elements.  If we find in $\cal  H$ the matrix $  (0000)$ or the
     matrix $(1111)$,  then we can find $S^3$,  $\bar{S}^3$ or $M^3$
     in $\cal H$.

\end{lemma}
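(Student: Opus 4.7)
The plan is to argue by contradiction, reducing everything to the earlier lemmas. By complementation---since $M^3$ is self-complementary and $S^3$, $\bar{S}^3$ swap under complementation---we may assume throughout that $\mathcal{H}$ contains the matrix $(0000)$; the $(1111)$ case follows by passing to the complementary collection, applying the result, and complementing back. Fix a row $r_1 \in \mathcal{H}$ whose restriction to four columns $c_1, c_2, c_3, c_4$ is $(0000)$, and assume for contradiction that none of $S^3$, $\bar{S}^3$, $M^3$ can be found in $\mathcal{H}$.

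The first step is to show that every row of $\mathcal{H}$ has either $0$ or $2$ ones on $c_1, c_2, c_3, c_4$. If some row had exactly one $1$ on these columns, reordering the columns to place that $1$ first would give the matrix $\binom{0000}{1000}$ together with $r_1$, so Lemma~\ref{lemmej5} (applicable because $\mathcal{H}$ is reduced and has at least five elements) would yield one of the forbidden matrices. Similarly, a row with $3$ or $4$ ones on those columns would yield $\binom{000}{111}$ together with $r_1$ on some three of the four columns, and Lemma~\ref{lemmeutile} would conclude.

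The second step is a short parity argument exploiting reducedness at $c_1$. Since the four columns of the $(0000)$ matrix must by definition lie in $\bigcup_{H \in \mathcal{H}} H$, reducedness forces two distinct rows $t_1, t_2 \in \mathcal{H}$ to differ only in $c_1$, say with $t_1$ having $0$ and $t_2$ having $1$ there. If $a$ denotes the number of ones of $t_1$ on $c_2, c_3, c_4$, the numbers of ones on $c_1, c_2, c_3, c_4$ are $a$ for $t_1$ and $a+1$ for $t_2$; both must lie in $\{0, 2\}$, which is impossible since these values differ by exactly $1$. This contradiction completes the proof. The main conceptual obstacle---essentially the only one---is spotting that the restriction ``every row has $0$ or $2$ ones on these four columns'' is already incompatible with $c_1$ being useful in a reduced collection; once this is seen, the lemma collapses to a one-line parity check after Lemmas~\ref{lemmej5} and \ref{lemmeutile} have been invoked.
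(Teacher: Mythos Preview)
Your proof is correct and follows essentially the same approach as the paper's: both use reducedness at $c_1$ to obtain two rows differing only in that column, and both invoke Lemmas~\ref{lemmeutile} and~\ref{lemmej5} to force a contradiction on the number of $1$'s those rows carry over $c_1,\dots,c_4$. Your phrasing as a global constraint (``every row has $0$ or $2$ ones on these four columns'') followed by a parity check is a slightly cleaner packaging of the same case split the paper carries out directly on the pair $r_2,r_3$.
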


\begin{proof}
Let us suppose  that we find $(0000)$ in $\cal H$,  and assume that we
    cannot  find   $S^3$,  $\bar{S}^3$  or   $M^3$  in  $\cal   H$.   By
    Lemma~\ref{lemmej5}  we know  that  we cannot  have $r_1\mapsto  r_i
    [c_1]$  for some  $i$. Thus,  we  may suppose  that $r_2\mapsto  r_3
    [c_1]$ and we find in $\cal H$ the following matrix:

\[
\left(
\begin{array}{l}
0000\\
0\\
1
\end{array}
\right)
\]

On the columns $2,  3, 4$, if we complete the rows 2  and 3 by at most
    one 1 we  are done by Lemma~\ref{lemmej5}. If we  complete by two or
    three 1's, we are done  by Lemma~\ref{lemmeutile}. The case where we
    find $(1111)$ is similar by complementation.
\end{proof}

\begin{lemma} \label{lemmej8}

Let $\cal H$ be a reduced  collection of   sets over at least five
     elements.  We can find $S^3$, $\bar{S}^3$ or $M^3$ in $\cal H$.

\end{lemma}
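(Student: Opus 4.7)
The plan is to proceed by contradiction, assuming that $\mathcal{H}$ is a reduced collection with $|E|\geq 5$, where $E=\bigcup_{H\in\mathcal{H}}H$, yet contains none of $S^3$, $\bar{S}^3$, $M^3$.

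First, I would use Lemma~\ref{lemmej6} to bound set sizes. If some $H\in\mathcal{H}$ satisfied $|H|\leq |E|-4$, then on any four columns of $E\setminus H$ the row corresponding to $H$ would be $(0000)$ and Lemma~\ref{lemmej6} would produce one of the forbidden submatrices, a contradiction; the dual argument (looking for $(1111)$) forbids $|H|\geq 4$. Hence every $H\in\mathcal{H}$ satisfies $|E|-3\leq |H|\leq 3$, which forces $|E|\in\{5,6\}$, and Lemma~\ref{lemmekogan} gives $|\mathcal{H}|\geq|E|+1\geq 6$. Next I would exploit Lemma~\ref{lemmeutile}: since no $000 \choose 111$ can appear in $\mathcal{H}$, no two sets $H_a,H_b\in\mathcal{H}$ satisfy $|H_b\setminus H_a|\geq 3$.

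In the case $|E|=6$, every $H$ has size $3$ and the above constraint becomes $|H_a\cap H_b|\geq 1$, so $\mathcal{H}$ is an intersecting family of $3$-subsets of $[6]$ with $|\mathcal{H}|\geq 7$. Since in $[6]$ two disjoint $3$-sets are complementary, $\mathcal{H}$ contains no complementary pair and thus no $M^3$ (whose extreme rows would necessarily be complements). To produce $S^3$ or $\bar{S}^3$, I would fix any $H_0\in\mathcal{H}$ and examine, for each $c\in[6]\setminus H_0$, whether some set of the form $\{c\}\cup S$ with $S\subseteq H_0$ and $|S|=2$ lies in $\mathcal{H}$; three such sets with distinct~$c$'s produce $S^3$, and the dual construction involving sets $(\{c_1,c_2,c_3\}\setminus\{c_i\})\cup\{x_i\}$ with a fixed triple $\{c_1,c_2,c_3\}\in\mathcal{H}$ produces $\bar{S}^3$. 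A case analysis on the structure of $\mathcal{H}$ (star versus non-star) then forces one of the two patterns.

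The case $|E|=5$ is treated in a parallel way: sets have sizes $2$ or $3$, and the constraint $|H_b\setminus H_a|\leq 2$ reduces to ``no $2$-set of $\mathcal{H}$ is the complement in $[5]$ of a $3$-set of $\mathcal{H}$''. The complementation symmetry $H\mapsto[5]\setminus H$, which swaps $S^3$ with $\bar{S}^3$ and preserves $M^3$, allows us to assume without loss of generality that there are at least three $3$-sets in $\mathcal{H}$, and then an analogous enumeration of the allowed intersection patterns yields one of the three forbidden configurations. The main obstacle is the finite but intricate case analysis required in both subcases; although the ground set is small and the constraints from Lemmas~\ref{lemmej6} and~\ref{lemmeutile} are tight, no single counting argument seems to close the gap, so one must pin down every essentially different intersecting (or nearly-intersecting) family of at least six or seven restricted sets in a five- or six-element ground set and exhibit the desired submatrix explicitly.
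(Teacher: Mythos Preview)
Your opening reduction is correct and reaches the same checkpoint as the paper: Lemma~\ref{lemmej6} forbids any row with four equal entries, so every $H$ satisfies $|E|-3\le|H|\le3$ and hence $|E|\le6$. However, you miss a one-line observation that makes your $|E|=6$ analysis unnecessary. Because $\mathcal{H}$ is reduced, there exist $H_i,H_j$ with $H_i\setminus\{x\}=H_j\setminus\{x\}$ for some $x$, so $\bigl||H_i|-|H_j|\bigr|=1$; this is impossible if every set has size exactly $3$. Thus $|E|=5$ automatically, and your entire discussion of intersecting $3$-uniform families in $[6]$ is vacuous. The paper exploits exactly this: it fixes $r_1\mapsto r_2[c_1]$, and the size constraints on this \emph{pair} (two ones and two zeros among the remaining columns) directly force five columns and the normal form $r_1=00011$, $r_2=10011$.

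From that point on the two approaches diverge. You propose to list, up to symmetry, all families of $\ge 6$ sets of sizes $2$ and $3$ in $[5]$ that avoid complementary pairs (your constraint from Lemma~\ref{lemmeutile}) and exhibit the submatrix by inspection. This is doable but you do not actually carry it out, and your description of how $S^3$ or $\bar{S}^3$ is located is only a heuristic, not an argument that covers every case. The paper instead keeps using the relations $r_i\mapsto r_j[c_k]$ column by column to grow a specific $5\times 5$ submatrix, and crucially disposes of the range $|\mathcal{H}|\ge 10$ in one stroke: pick nine sets, delete columns until they are reduced, and since nine rows over five columns are already reduced or drop to four columns, Lemma~\ref{lemme94} finishes. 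Your outline has no analogue of this shortcut, so your promised enumeration would have to handle families of size up to $10$ in $[5]$ rather than only up to $9$. In short, the skeleton is right, but the $|E|=6$ branch should be pruned immediately, and the $|E|=5$ case still needs an actual argument rather than a plan.
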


\begin{proof}
Assume that we cannot find $S^3$, $\bar{S}^3$ or $M^3$ in $\cal H$. We
    suppose w.l.o.g.  that $r_1 \mapsto r_2  [c_1]$. If $\cal  H$ has at
    least  six columns,  then  we are  be  done by  Lemma~\ref{lemmej6}.
    Hence there  are exactly five columns,  and we can find  in $\cal H$
    the matrix:

\[
\left(
\begin{array}{l}
00011\\
10011
\end{array}
\right)
\]

(1) We first suppose that $\cal H$ has at most nine sets. If we delete
    any one of the columns $c_1, \dots, c_5$, then two rows must become
    equal. Since  there are at  most nine rows,  we know that a  row is
    involved twice in this process, say w.l.o.g. $r_1$ or $r_2$. We can
    assume it  is $r_1$ by  symmetry.  If we  can find $r_3$  such that
    $r_3  \mapsto  r_1  [c_j]$, with  $j=4$  or  $5$,  we are  done  by
    Lemma~\ref{lemmej6}. Hence we can assume w.l.o.g. that $r_1 \mapsto
    r_3 [c_2]$.  We find in $\cal H$ the matrix:

\[
\left(
\begin{array}{rl}
00011  &     \\
10011  &     \\
01011  &
\end{array}
\right)
\]

Let $r_4$  be such that $r_i \mapsto  r_4 [c_3]$ for some  $i$. By the
    argument  of the beginning  of this  proof, we  know that  $r_4$ has
    exactly three  1's and  two 0's.  Exactly one of  these 1's  is over
    $c_1$,  $c_2$ (by  Lemma~\ref{lemmeutile}  and to  avoid $S^3$).  By
    symmetry between  $c_1$ and $c_2$,  and between $c_4$ and  $c_5$, we
    obtain w.l.o.g. the following matrix:

\[
\begin{array}{cl}
\left(
\begin{array}{rl}
00011  &     \\
10011  &     \\
01011  &     \\
10101  &    \\
10001  &  ^1 \\
       0  &  ^2
\end{array}
\right)
&
\begin{array}{l}
^1 \text{take $i=5$ in $r_i \mapsto r_4 [c_3]$ }\\
^2 \text{each column contains a 0}\\
\end{array}
\end{array}
\]

We now consider two cases:
\begin{description}
\item[First case:]
We find in $\cal H$:

\[
\begin{array}{cl}
\left(
\begin{array}{ccccc}
0&0&0&1&1\\
1&0&0&1&1\\
0&1&0&1&1\\
1&0&1&0&1\\
1&0&0&0&1\\
\fbox{1}&1^2&0^3&0^1&0
\end{array}
\right)
&
\begin{array}{l}
^1 \text{if 1, we find $\bar{S}^3$ with the columns 1, 4, 5.}\\
^2 \text{if 0, we are done by Lemma \ref{lemmeutile}.}\\
^3 \text{if 1, we are done by Lemma \ref{lemmeutile}.}
\end{array}
\end{array}
\]

We find $S^3$ (columns 2, 3 and 4).

\item[Second case:]

We find in $\cal H$:
\[
\begin{array}{cl}
\left(
\begin{array}{ccccc}
0&0&0&1&1\\
1&0&0&1&1\\
0&1&0&1&1\\
1&0&1&0&1\\
1&0&0&0&1\\
\fbox{0}&0^2&1^1&1^1&0
\end{array}
\right)
&
\begin{array}{l}
^1 \text{by Lemma \ref{lemmeutile}.}\\
^2 \text{by Lemma \ref{lemmeutile}.}
\end{array}
\end{array}
\]

We find $S^3$ (columns 1, 2 and 3).

\end{description}

(2) So the  lemma is proved  unless $\cal H$  has more than nine  sets. In
     this case, we  pick nine of them. If they  form a reduced collection
     then we are done. If they  do not, we delete a  useless element.
    We stay with nine distinct sets defined over four elements,
and   we are done by Lemma~\ref{lemme94}.

\end{proof}

Now, by Lemma~\ref{lemme94} and Lemma~\ref{lemmej8} we obtain:

\begin{proposition} \label{profinal}
Let $\cal H$ be  a collection of at least 9 distinct  sets. We can find
     $S^3$, $\bar{S}^3$ or $M^3$ in $\cal H$. Hence $S(2,2)=8$.
\end{proposition}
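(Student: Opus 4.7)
The plan is to combine Lemma~\ref{lemme94} (which handles nine distinct sets over exactly four elements) and Lemma~\ref{lemmej8} (which handles reduced collections over at least five elements) via a case split on the number of elements in the ground set. The matrix $F$ displayed earlier already witnesses $S(2,2)\geq 8$, so the content of the proposition is the upper bound: any nine distinct sets contain one of $S^3$, $\bar{S}^3$, $M^3$.

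First I would reduce to the reduced case. Given a collection $\cal H$ of at least nine distinct sets, I would pick exactly nine of them, call this $\mathcal{H}_0$, and then delete useless elements as long as any exist, obtaining a reduced collection $\mathcal{H}_0'$ of nine distinct sets (as observed in the paper, any singleton/cosingleton/monotone matrix found in $\mathcal{H}_0'$ lifts to one in $\mathcal{H}_0\subseteq\mathcal{H}$). So it suffices to handle a reduced collection of nine distinct sets.

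Next, let $N$ be the number of elements in the ground set of $\mathcal{H}_0'$. Since there are $2^N$ possible subsets and we have nine distinct sets, $N\geq 4$. By Lemma~\ref{lemmekogan} applied to the reduced collection, $N\leq 9-1=8$. I would then split into the two cases:
\begin{itemize}
\item If $N=4$, apply Lemma~\ref{lemme94} directly to $\mathcal{H}_0'$, which produces $S^3$, $\bar{S}^3$ or $M^3$.
\item If $N\geq 5$, apply Lemma~\ref{lemmej8} to $\mathcal{H}_0'$, which does the same.
\end{itemize}

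In either case we find the required submatrix in $\mathcal{H}_0'$, hence in $\mathcal{H}$, so every collection of nine distinct sets admits one of the three matrices. Combined with the lower bound $S(2,2)\geq 8$ given by $F$, this yields $S(2,2)=8$. I do not expect any real obstacle here, since all the work has been pushed into Lemmas~\ref{lemme94} and~\ref{lemmej8}; the only point to be slightly careful about is ensuring that reducing the arbitrary nine-set subcollection does not destroy distinctness (it does not, by construction) and that the element count $N\geq 4$ rules out pathological small-ground-set configurations.
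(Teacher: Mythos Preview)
Your proof is correct and follows exactly the route the paper intends: the paper's proof of the proposition is literally the single line ``by Lemma~\ref{lemme94} and Lemma~\ref{lemmej8}'', and you have spelled out precisely the reduction-plus-case-split that this line encodes. The observation $N\leq 8$ via Lemma~\ref{lemmekogan} is not actually needed (only $N\geq 4$ and the split $N=4$ versus $N\geq 5$ matter), but it does no harm.
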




\end{document}